\newtheorem{theorem}{Theorem}
\numberwithin{theorem}{section}
\newtheorem{prop}[theorem]{Proposition}
\newtheorem{lemma}[theorem]{Lemma}
\newtheorem{defn}[theorem]{Definition}
\newtheorem{remark}[theorem]{Remark}
\newtheorem{example}[theorem]{Example}
\newcommand{\ddp}[2]{\frac{\partial#1}{\partial#2}}
\newcommand{\D}{\partial D}
\newcommand{\Si}{\mathcal{S}}
\newcommand{\A}{\mathcal{A}}
\newcommand{\K}{\mathcal{K}}
\newcommand{\spn}{\text{span}}
\newcommand{\outside}{\mathbb{R}^2\setminus \overline{D}}
\newcommand{\sinc}{\text{sinc}}
\title{A fully-coupled subwavelength resonance approach to modelling the passive cochlea}
\author{Habib Ammari\thanks{Department of Mathematics, ETH Z\"urich, R\"amistrasse 101, CH-8092 Z\"urich, Switzerland (habib.ammari@math.ethz.ch, bryn.davies@sam.math.ethz.ch).} \and Bryn Davies\footnotemark[1]}
\date{}
\begin{document}
\maketitle
\noindent\rule{\textwidth}{0.4pt}
\begin{abstract}
    The aim of this paper is to understand the behaviour of a large number of coupled subwavelength resonators. We use layer potential techniques in combination with numerical computations to study {the acoustic pressure field due to scattering by} a graded array of subwavelength resonators. Using this method, we study a graded-resonance model for the cochlea. We compute the resonant modes of the system and explore the model's ability to decompose incoming signals. We are able to offer mathematical explanations for the cochlea's so-called ``travelling wave'' behaviour and tonotopic frequency map.
\end{abstract}

\noindent\textbf{Mathematics subject classification:} 35R30, 35C20

\noindent\textbf{Keywords:} subwavelength resonance, cochlear mechanics, coupled resonators, hybridisation, passive cochlea, signal processing

\noindent\rule{\textwidth}{0.4pt}
\section{Introduction}

The development of the understanding of the cochlea has largely been a dichotomy between two classes of models \cite{bell2012resonance}. The first, proposed by Hermann von Helmholtz in the 1850s, is based on resonators tuned to different audible frequencies being distributed along the length of the cochlea \cite{helmholtz1875}. Later, Georg von B\'ek\'esy demonstrated that when the cochlea is stimulated a wave travels from the base to the apex along the basilar membrane \cite{von1960experiments}. This discovery won him a Nobel Prize in 1961 and lead to the creation of models based on each receptor cell being excited in sequence as the signal travels through the cochlea.

The cochlea is, at its simplest, a long tube filled with fluid, into which sound waves enter through the oval window. An elastic membrane, known as the basilar membrane, is suspended across the centre and upon its surface sit bundles of cylindrical cells, known as hair cells. These bundles of hair cells are the receptor cells of the ear, which produce electrical signals when deflected laterally \cite{hudspeth1983hair,hudspeth2008making}. The tips of the hair cells are attached to a membrane known as the tectorial membrane and, as a result, motion of the basilar membrane displaces the hair cell and a signal is produced \cite{reichenbach2014physics}. The cochlea's ability to filter sounds by pitch is based on the fact that the basilar membrane is graded in both size and stiffness. Thus, cochlear mechanics is, at its heart, a question of studying the motion of a graded elastic membrane in response to a sound wave.

\begin{figure}
	\begin{center}
		\begin{tikzpicture}
		\draw[fill=blue!20!white] (0,0.7) -- (0,1) -- plot [smooth] coordinates {(0,1) (7,0.9) (8,0) (7,-0.9) (0,-1)} -- (0,0.4) -- plot [smooth] coordinates {(0,0.4) (0.05,0.55) (0,0.7)};
		\draw[gray] plot [smooth] coordinates {(0,0.4) (-0.05,0.55) (0,0.7)};
		
		\begin{scope}[xshift=5,scale=0.2]
		\draw[fill=gray] (0.6,0) -- plot [smooth] coordinates {(0,0) (0.3,1) (0.6,0)};
		\end{scope}
		
		\begin{scope}[xshift=15,scale=0.21]
		\draw[fill=gray] (0.6,0) -- plot [smooth] coordinates {(0,0) (0.3,1) (0.6,0)};
		\end{scope}
		
		\begin{scope}[xshift=25,scale=0.22]
		\draw[fill=gray] (0.6,0) -- plot [smooth] coordinates {(0,0) (0.3,1) (0.6,0)};
		\end{scope}
		
		\begin{scope}[xshift=35,scale=0.23]
		\draw[fill=gray] (0.6,0) -- plot [smooth] coordinates {(0,0) (0.3,1) (0.6,0)};
		\end{scope}
		
		\begin{scope}[xshift=45,scale=0.24]
		\draw[fill=gray] (0.6,0) -- plot [smooth] coordinates {(0,0) (0.3,1) (0.6,0)};
		\end{scope}
		
		\begin{scope}[xshift=55,scale=0.25]
		\draw[fill=gray] (0.6,0) -- plot [smooth] coordinates {(0,0) (0.3,1) (0.6,0)};
		\end{scope}
		
		\begin{scope}[xshift=65,scale=0.26]
		\draw[fill=gray] (0.6,0) -- plot [smooth] coordinates {(0,0) (0.3,1) (0.6,0)};
		\end{scope}
		
		\begin{scope}[xshift=75,scale=0.27]
		\draw[fill=gray] (0.6,0) -- plot [smooth] coordinates {(0,0) (0.3,1) (0.6,0)};
		\end{scope}
		
		\begin{scope}[xshift=85,scale=0.28]
		\draw[fill=gray] (0.6,0) -- plot [smooth] coordinates {(0,0) (0.3,1) (0.6,0)};
		\end{scope}
		
		\begin{scope}[xshift=95,scale=0.29]
		\draw[fill=gray] (0.6,0) -- plot [smooth] coordinates {(0,0) (0.3,1) (0.6,0)};
		\end{scope}
		
		\begin{scope}[xshift=105,scale=0.3]
		\draw[fill=gray] (0.6,0) -- plot [smooth] coordinates {(0,0) (0.3,1) (0.6,0)};
		\end{scope}
		
		\begin{scope}[xshift=115,scale=0.31]
		\draw[fill=gray] (0.6,0) -- plot [smooth] coordinates {(0,0) (0.3,1) (0.6,0)};
		\end{scope}
		
		\begin{scope}[xshift=125,scale=0.32]
		\draw[fill=gray] (0.6,0) -- plot [smooth] coordinates {(0,0) (0.3,1) (0.6,0)};
		\end{scope}
		
		\begin{scope}[xshift=135,scale=0.33]
		\draw[fill=gray] (0.6,0) -- plot [smooth] coordinates {(0,0) (0.3,1) (0.6,0)};
		\end{scope}
		
		\begin{scope}[xshift=145,scale=0.34]
		\draw[fill=gray] (0.6,0) -- plot [smooth] coordinates {(0,0) (0.3,1) (0.6,0)};
		\end{scope}
		
		\begin{scope}[xshift=155,scale=0.35]
		\draw[fill=gray] (0.6,0) -- plot [smooth] coordinates {(0,0) (0.3,1) (0.6,0)};
		\end{scope}
		
		\begin{scope}[xshift=165,scale=0.36]
		\draw[fill=gray] (0.6,0) -- plot [smooth] coordinates {(0,0) (0.3,1) (0.6,0)};
		\end{scope}
		
		\begin{scope}[xshift=175,scale=0.37]
		\draw[fill=gray] (0.6,0) -- plot [smooth] coordinates {(0,0) (0.3,1) (0.6,0)};
		\end{scope}
		
		\begin{scope}[xshift=185,scale=0.38]
		\draw[fill=gray] (0.6,0) -- plot [smooth] coordinates {(0,0) (0.3,1) (0.6,0)};
		\end{scope}
		
		\draw[fill=gray] plot coordinates {(0,0) (7,0.05) (7,-0.05) (0,0)};
		\draw (0,0.2) --  (6.62,0.38);
		\draw[<-] plot [smooth] coordinates  {(-0.08,0.55) (-0.3,0.7) (-0.35,1.3)};
		\node at (-0.35,1.5) {\small Oval window};
		
		\begin{scope}[xshift=110]
		\draw[<-] plot [smooth] coordinates  {(2.9,0.25) (3.3,0.7) (3.35,1.4)};
		\node at (3.35,1.6) {\small Hair cells};
		\end{scope}
		
		\begin{scope}[xshift=2]
		\draw[<-] plot [smooth] coordinates  {(3.5,0.35) (3.35,0.7) (3.3,1.4)};
		\node at (3.35,1.6) {\small Tectorial membrane};
		\end{scope}
		
		\begin{scope}[xshift=-50]
		\draw[<-] plot [smooth] coordinates  {(6.8,-0.1) (6.8,-0.7) (6.7,-1.3)};
		\node at (6.5,-1.5) {\small Basilar membrane};
		\end{scope}
		
		\begin{scope}[xshift=-30]
		\draw[<-] plot [smooth] coordinates  {(2,-0.55) (2,-0.8) (2.1,-1.3)};
		\node at (2.2,-1.5) {\small Fluid-filled tube};
		\end{scope}
		\end{tikzpicture}
	\end{center}
	\caption{A cross-section of a simplified model of the cochlea.} \label{fig:diagram}
\end{figure}
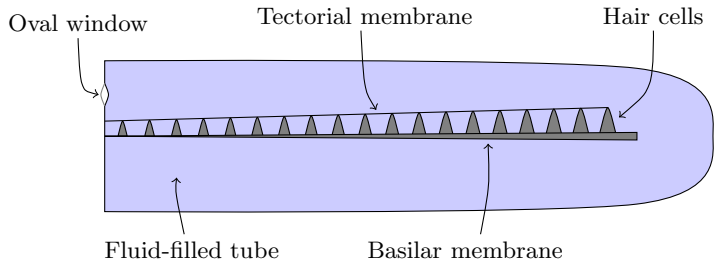

Given what is now know about the structure of the cochlea, and that its function can be reduced to studying membrane motion, one might be inclined to think that Helmholtz' resonance ideas are no longer relevant. However, the basilar membrane is much stiffer across the cochlea's width (perpendicular to the page in \Cref{fig:diagram}) than along its length. As a result, it has been shown, \emph{e.g.} by Charles Babbs in \cite{babbs2011quantitative}, that its motion can be modelled by an array of harmonic oscillators. Comparing the cochlea's length (approximately 3cm) to the wavelength of audible sound (a few centimetres to several metres) and it is clear that these resonators operate in a subwavelength regime. 

Babbs shows that if the basilar membrane is split into segments (and considered as an array of oscillators) then the interactions between each part due to elastic tension can be neglected. However, the oscillators will also be coupled by variations in the pressure of the cochlear fluid, by which they are surrounded. The mathematical complexity of modelling these interactions has been one of the main impediments facing the development of this class of cochlear models.



In this paper, we apply boundary integral techniques to understand the complex interactions between coupled subwavelength resonators \cite{ammari2017double, ammari2015mathematical}. In order to simulate the basilar membrane, we consider the problem of acoustic wave scattering by compressible elements in two-dimensional space. 
Similar layer-potential techniques have previously been applied to other materials that exhibit subwavelength resonance, the classical example being the Minnaert resonance of air bubbles in water \cite{ammari2018minnaert, ammari2017double}. This analysis (in Sections~\ref{subsec:resonant_modes}~\&~\ref{subsec:numerics}) relies on the use of layer potential techniques \cite{ammari2018mathematical, ammari2009layer, ammari2004boundary}.

It is found that a graded array of hybridised resonators has a set of resonant frequencies that becomes increasingly dense (within a finite range) as the number of resonators is increased. We study the eigenmodes and present a scheme (in \Cref{subsec:signal_processing}) for how the model processes incoming signals, filtering them into the system's resonant frequencies. Finally, in Sections~\ref{sec:cochlea}~\&~\ref{sec:tonotopic} we present the important observations that our graded-resonance model predicts the existence of a travelling wave in the pressure field and a basis for the tonotopic map. This acoustic pressure wave is complementary to the wave seen in the motion of the basilar membrane by B\'ek\'esy and has itself been observed experimentally \cite{olson1999direct}.

\section{Response of the coupled resonators} \label{sec:resonance_analysis}
\subsection{Preliminaries} \label{subsec:prelims}

We consider a domain $D$ in $\mathbb{R}^2$ which is the disjoint union of $N\in\mathbb{N}$ bounded and simply connected subdomains $\{D_1,\ldots,D_N\}$ such that, for each $n=1,\dots,N$, there is $0<s<1$ so that $\D_n\in C^{1,s}$ (that is, each $\D_n$ is locally the graph of a differentiable function whose derivatives are H\"older continuous with exponent $s$).  We will consider the resonators arranged in a straight line since the curvature of the cochlea does not contribute to its mechanical behaviour \cite{duifhuis2012cochlear}. \Cref{fig:bundles} shows an example of such an arrangement (in the special case of circular subdomains, which we will consider for the numerical simulations in \Cref{subsec:numerics} onwards).

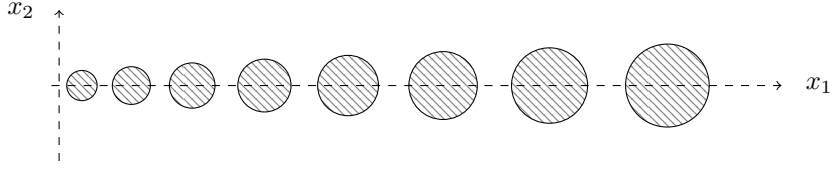
\begin{figure}
\begin{center}
\begin{tikzpicture}
\draw[pattern=north west lines, pattern color=gray] (0.3,0) circle (0.2);
\draw[pattern=north west lines, pattern color=gray] (0.95,0) circle (0.25);
\draw[pattern=north west lines, pattern color=gray] (1.75,0) circle (0.3);
\draw[pattern=north west lines, pattern color=gray] (2.7,0) circle (0.35);
\draw[pattern=north west lines, pattern color=gray] (3.8,0) circle (0.4);
\draw[pattern=north west lines, pattern color=gray] (5.05,0) circle (0.45);
\draw[pattern=north west lines, pattern color=gray] (6.45,0) circle (0.5);
\draw[pattern=north west lines, pattern color=gray] (8,0) circle (0.55);

\draw [dashed, ->] (-0.1,0) -- (9.5,0);
\draw [dashed, ->] (0,-1) -- (0,1);

\node at (-0.5,1) {$x_2$};
\node at (10,0) {$x_1$};
\end{tikzpicture}
\end{center}
\caption{An array of eight (circular) subdomains $D=D_1\cup\dots\cup D_8$ graded in size and arranged linearly along $x_2=0$.} \label{fig:bundles}
\end{figure}

We denote by $\rho_b$ and $\kappa_b$ the density and bulk modulus of the interior of the resonators, respectively, and denote by $\rho$ and $\kappa$ the corresponding parameters for the auditory fluid (which we assume occupies $\mathbb{R}^2\setminus \overline{D}$).

We consider an incident acoustic pressure wave $p^{in}(x,t)$ (where $x=(x_1,x_2)\in\mathbb{R}^2$ and $t\in\mathbb{R}$) that is scattered by $D$. This problem is given by
\begin{equation} \label{eq:wave_equation}
\begin{cases}
\left( \nabla\cdot\frac{1}{\rho}\nabla  - \frac{1}{\kappa} \ddp{^2}{t^2} \right) p = 0, & \text{for }(x,t)\in \outside\times\mathbb{R}, \\
\left( \nabla\cdot\frac{1}{\rho_b}\nabla  - \frac{1}{\kappa_b} \ddp{^2}{t^2} \right) p = 0, & \text{for }(x,t)\in D\times\mathbb{R}, \\
p_+ - p_- = 0, & \text{for }(x,t)\in\D\times\mathbb{R},\\
\frac{1}{\rho} \ddp{p}{\nu_x}\big|_+ - \frac{1}{\rho_b} \ddp{p}{\nu_x}\big|_- = 0, & \text{for }(x,t)\in\D\times\mathbb{R}, \\
p^s := p - p^{in} = 0, & \text{for } x\in\mathbb{R}^2, t \ll 0,
\end{cases}
\end{equation}
where $\ddp{}{\nu_x}$ denotes the outward normal derivative in $x$ and the  subscripts + and - are used to denote evaluation from outside and inside $\D$ respectively.

We then introduce the auxiliary parameters
\begin{equation*}
v=\sqrt{\frac{\kappa}{\rho}}, \quad v_b=\sqrt{\frac{\kappa_b}{\rho_b}}, \quad k=\frac{\omega}{v}, \quad k_b=\frac{\omega}{v_b},
\end{equation*}
which are the wave speeds and wavenumbers  in $\mathbb{R}^2\setminus \overline{D}$ and in $D$ respectively. 

We introduce the dimensionless contrast parameters
\begin{equation} \label{defn:contrasts}
\mu:=\frac{\kappa_b}{\kappa}, \quad
\delta := \frac{\rho_b}{\rho}, \quad \tau := 
\frac{v_b}{v} = \sqrt{\frac{\rho \kappa_b}{\rho_b \kappa}}.
\end{equation}
By rescaling the dimensions of the physical problem, we may assume that $v = O(1)$
and that the resonators $\{D_1,\ldots,D_N\}$ have widths that are $O(1)$. We further assume that $\tau=O(1)$. On the other hand, since each resonator $D_n$ behaves as a harmonic oscillator \cite{devaud2008minnaert}, we can use the calculations of Babbs \cite{babbs2011quantitative} to show that in order for the system of resonators to replicate the elastic properties of the basilar membrane it must be the case that 
\begin{equation}
\mu \ll 1,
\end{equation}
the details of which are given in \Cref{appen:high_contrast}. Since $\tau=\sqrt{\mu/\delta}$, these assumptions give that $\delta \ll 1$. It is important to note that this material contrast condition is an essential prerequisite for the structure $D$ to exhibit resonant behaviours at subwavelength frequencies \cite{ammari2018mathematical}.

%

We transform problem \eqref{eq:wave_equation} into the complex frequency domain by making the transformation $u(x,\omega):=\int_{-\infty}^{\infty} p(x,t) e^{i\omega t} \, dt$, $\omega\in\mathbb{C}$ to reach
\begin{equation} \label{eq:helmholtz_equation}
\begin{cases}
\left( \Delta + k^2 \right) u(x,\omega) = 0, & \text{in } \outside, \\
\left( \Delta + k_b^2 \right) u(x,\omega) = 0, & \text{in } D, \\
u_+ - u_- = 0, & \text{on } \D,\\
\delta \ddp{u}{\nu}\big|_+ -  \ddp{u}{\nu}\big|_- = 0, & \text{on } \D, \\
u^s := u - u^{in} \text{ satisfies the SRC}, & \text{as } |x|\to\infty.
\end{cases}
\end{equation}
`SRC' is used to denote the Sommerfeld radiation condition
\begin{equation} \label{eq:src}
\lim\limits_{|x|\to\infty} |x|^{1/2} \left(\ddp{}{|x|}-ik\right)u(x,\omega)=0.
\end{equation}
The SRC is the condition required to ensure that we select the solution that is outgoing (rather than incoming from infinity) and gives the well-posedness of problem \eqref{eq:helmholtz_equation}.

We wish to use integral operators known as layer potentials to represent the solution to the scattering problem \eqref{eq:helmholtz_equation}.
\begin{defn}
We define the \underline{Helmholtz single layer potential} associated with the domain $D$ and wavenumber $k$ as
\begin{equation}
\Si_{D}^k[\varphi](x) := \int_{\D} \Gamma^k(x-y) \varphi(y)\, d\sigma(y),\quad x\in\D, \varphi\in L^2(\D),
\end{equation}
where $\Gamma^k$ is the outgoing (i.e. satisfying the SRC) fundamental solution to the Helmholtz operator $\Delta+k^2$ in $\mathbb{R}^2$.
We similarly define the \underline{Neumann-Poincar\'e} operator associated with $D$ and $k$ as 
\begin{equation}
\K_{D}^{k,*}[\varphi](x)=\int_{\D} \ddp{\Gamma^k(x-y)}{\nu_x}\varphi(y)\, d\sigma(y),\quad x\in\D, \varphi\in L^2(\D).
\end{equation}
\end{defn}

We can then represent the solution to \eqref{eq:helmholtz_equation} as
\begin{equation} \label{eq:layer_potential_representation}
u = \begin{cases}
u^{in}(x)+\Si_{D}^k[\psi](x), & x\in\mathbb{R}^2\backslash \overline{D},\\
\Si_{D}^{k_b}[\phi](x), & x\in D,
\end{cases}
\end{equation} for some surface potentials $(\phi,\psi)\in L^2(\D)\times L^2(\D)$.

We define the space $H^1(\D):=\{u\in L^2(\D): \nabla u\in L^2(\D)\}$ in the usual way and use $Id$ to denote the identity on $L^2(\D)$. Then, using the representation \eqref{eq:layer_potential_representation}, problem \eqref{eq:helmholtz_equation} is equivalent \cite{ammari2004boundary, ammari2009layer} to finding $(\phi,\psi)\in L^2(\D)\times L^2(\D)$ such that
\begin{equation} \label{eq:A_matrix_equation}
\A(\omega,\delta)\begin{pmatrix} \phi \\ \psi \end{pmatrix} = \begin{pmatrix} u^{in} \\ \delta\ddp{u^{in}}{\nu_x}\end{pmatrix},
\end{equation}
where
\begin{equation} \label{eq:A_matrix_defn}
\A(\omega,\delta):=
\left[ {\begin{array}{cc}
	\Si_D^{k_b} & -\Si_D^k \\
	-\frac{1}{2}Id+\K_D^{k_b,*} & -\delta(\frac{1}{2}Id+\K_D^{k,*}) \\
	\end{array} } \right].
\end{equation}

We now recall from \emph{e.g.} \cite{ammari2018minnaert, ammari2018mathematical} the main result that will allow us to understand the leading order behaviour of $\A$ in \eqref{eq:A_matrix_equation}.

\begin{lemma} \label{lem:expansion}
	In the space $\mathcal{L}(L^2(\D)\times L^2(\D),H^1(\D)\times L^2(\D))$ we have
	
	\begin{equation*}
	\A(\omega,\delta)=\A_0+\omega^2\ln\omega\A_{1,1,0}+\omega^2\A_{1,2,0}+\delta\A_{0,1}+O(\delta\omega^2\ln\omega)+O(\omega^4\ln\omega),
	\end{equation*}
	where
	\begin{equation*}
	\A_0:=
	\left[ {\begin{array}{cc}
		\hat{\Si}_D^{k_b} & -\hat{\Si}_D^k \\
		-\frac{1}{2}Id+\K_D^{*} & 0 \\
		\end{array} } \right],
	\A_{1,1,0}:=
	\left[ {\begin{array}{cc}
	v_b^{-2}\Si_{D,1}^{(1)} & -v^{-2}\Si_{D,1}^{(1)} \\
	v_b^{-2}\K_{D,1}^{(1)} & 0 \\
	\end{array} } \right],
	\end{equation*}
	\begin{equation*}
	\A_{1,2,0}:=
	\left[ {\begin{array}{cc}
		v_b^{-2}(-\ln v_b\Si_{D,1}^{(1)}+\Si_{D,1}^{(2)}) &
		-v^{-2}(-\ln v\Si_{D,1}^{(1)}+\Si_{D,1}^{(2)}) \\
		v_b^{-2}(-\ln v_b\K_{D,1}^{(1)}+\K_{D,1}^{(2)}) & 0 \\
		\end{array} } \right],
	\end{equation*}
	and
	\begin{equation*}
	\A_{0,1}:=
	\left[ {\begin{array}{cc}
		0 & 0 \\
		0 & -(\frac{1}{2}Id+\K_D^{*}) \\
		\end{array} } \right].
	\end{equation*}
	The above operators are defined as
	\begin{align*}
	\Si_D[\phi](x)&:=\frac{1}{2\pi}\int_{\D}\ln|x-y|\phi(y)\,d\sigma(y),\\
	\hat{\Si}_D^k[\phi](x)&:=\Si_D[\phi](x)+\eta_k\int_{\D}\phi \,d\sigma,\quad
	\eta_k:=\frac{1}{2\pi}(\ln k+\gamma-\ln 2)-\frac{i}{4},\\
	\Si_{D,1}^{(1)}[\phi](x)&:= \int_{\D} b_1|x-y|^2\phi(y)\, d\sigma(y),\\
	\Si_{D,1}^{(2)}[\phi](x)&:= \int_{\D} b_1|x-y|^2\ln|x-y|\phi(y) +c_1|x-y|^2\phi(y)\, d\sigma(y),\\
	\K_{D,1}^{(1)}[\phi](x)&:=\int_{\D} b_1 \frac{\partial |x-y|^2}{\partial \nu(x)} \phi(y)\, d\sigma(y),\\
	\K_{D,1}^{(2)}[\phi](x)&:=\int_{\D} b_1 \frac{\partial |x-y|^2\ln|x-y|}{\partial \nu(x)} \phi(y)+c_1 \frac{\partial |x-y|^2}{\partial \nu(x)} \phi(y)\, d\sigma(y),
	\end{align*}
	where $b_1:=-\frac{1}{8\pi}$, $c_1:=-\frac{1}{8\pi}(\gamma-\ln 2-1-\frac{i\pi}{2})$ and $\gamma=0.5772\ldots$ is the Euler constant.
\end{lemma}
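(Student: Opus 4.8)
\emph{Proof proposal.} The plan is to expand each of the four operator entries of $\A(\omega,\delta)$ in \eqref{eq:A_matrix_defn} about $\omega=0$ and $\delta=0$, using the small-argument asymptotics of the two-dimensional Helmholtz fundamental solution $\Gamma^k(x)=-\tfrac{i}{4}H_0^{(1)}(k|x|)$, and then to regroup the terms into the asserted block form. The classical series for the Hankel function $H_0^{(1)}$ near the origin gives, uniformly for $x$ in any fixed bounded set,
\begin{equation*}
\Gamma^k(x)=\frac{1}{2\pi}\ln|x|+\eta_k+\bigl(b_1|x|^2\ln(k|x|)+c_1|x|^2\bigr)k^2+O(k^4\ln k),
\end{equation*}
with $\eta_k$, $b_1$, $c_1$ exactly the constants in the statement (the $j\ge2$ terms of the series will turn out not to contribute at the order we need). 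First I would substitute this into the definitions of $\Si_D^k$ and $\K_D^{k,*}$, and the same with $k$ replaced by $k_b$. In the single-layer potential the constant $\eta_k$ produces the term $\eta_k\int_{\D}\varphi\,d\sigma$, so the $O(1)$ part of $\Si_D^k$ — still depending on $\omega$ logarithmically through $\eta_k$ — is exactly $\hat{\Si}_D^k$; in the Neumann--Poincar\'e operator $\eta_k$ is killed by the normal derivative, so the $O(1)$ part of $\K_D^{k,*}$ is the static operator $\K_D^{*}=\K_D^{0,*}$. Writing $\ln(k|x-y|)=\ln k+\ln|x-y|$ and carrying through the $k^2$-kernel then gives
\begin{equation*}
\Si_D^k=\hat{\Si}_D^k+k^2\ln k\,\Si_{D,1}^{(1)}+k^2\Si_{D,1}^{(2)}+O(k^4\ln k),\qquad \K_D^{k,*}=\K_D^{*}+k^2\ln k\,\K_{D,1}^{(1)}+k^2\K_{D,1}^{(2)}+O(k^4\ln k),
\end{equation*}
the remainders being understood in $\mathcal{L}(L^2(\D),H^1(\D))$ and $\mathcal{L}(L^2(\D),L^2(\D))$ respectively; note that under $\partial_{\nu_x}$ the $\ln k$ in the $k^2$-kernel multiplies only $\partial_{\nu_x}(b_1|x-y|^2)$, which is why $\K_{D,1}^{(1)}$ carries no logarithm of $|x-y|$ whereas $\K_{D,1}^{(2)}$ does.

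Next I would set $k=\omega/v$ and $k_b=\omega/v_b$, so that $k^2=v^{-2}\omega^2$ and $k^2\ln k=v^{-2}\omega^2\ln\omega-v^{-2}\ln v\,\omega^2$ (and likewise with $v_b$), and regroup according to powers of $\omega$ and $\ln\omega$. This yields, for example,
\begin{equation*}
\Si_D^{k_b}=\hat{\Si}_D^{k_b}+\omega^2\ln\omega\,v_b^{-2}\Si_{D,1}^{(1)}+\omega^2 v_b^{-2}\bigl(-\ln v_b\,\Si_{D,1}^{(1)}+\Si_{D,1}^{(2)}\bigr)+O(\omega^4\ln\omega),
\end{equation*}
and the analogous identities for $-\Si_D^k$, for $-\tfrac12 Id+\K_D^{k_b,*}$, and for $\K_D^{k,*}$. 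The contrast $\delta$ enters \eqref{eq:A_matrix_defn} only through the explicit prefactor in the $(2,2)$ entry, so $-\delta(\tfrac12 Id+\K_D^{k,*})=-\delta(\tfrac12 Id+\K_D^{*})+O(\delta\omega^2\ln\omega)$. Substituting all four expansions into the $2\times2$ block matrix \eqref{eq:A_matrix_defn} and matching powers of $\omega$, $\ln\omega$ and $\delta$ reproduces $\A_0$, $\A_{1,1,0}$, $\A_{1,2,0}$ and $\A_{0,1}$ exactly, with combined error $O(\delta\omega^2\ln\omega)+O(\omega^4\ln\omega)$.

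The bookkeeping above is routine; the substantive step is to justify the remainder estimates in the stated operator norms — that is, to show that the operators whose kernels are the discarded tails $k^{2j}|x-y|^{2j}\ln(k|x-y|)$ for $j\ge2$, together with their normal derivatives, are $O(\omega^4\ln\omega)$ as maps $L^2(\D)\to H^1(\D)$ (for the single-layer rows) and $L^2(\D)\to L^2(\D)$ (for the Neumann--Poincar\'e rows), uniformly in $\omega$ near $0$. This is where the hypotheses on $D$ are used: boundedness of $D$ keeps $|x-y|$ bounded, the logarithmic singularity is integrable, and the $C^{1,s}$ regularity of $\D$ gives the $H^1$-continuity of the single-layer piece and the boundedness of the Neumann--Poincar\'e-type operators. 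These are precisely the standard mapping properties of layer potentials recalled in the cited references, so I expect this to be the only genuinely technical point, and once it is in place the proof is complete.
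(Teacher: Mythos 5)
Your proposal is correct: the paper itself gives no proof of \Cref{lem:expansion}, merely recalling it from the cited references, and your argument --- expanding $\Gamma^k(x)=-\tfrac{i}{4}H_0^{(1)}(k|x|)$ via the small-argument series for $H_0^{(1)}$, identifying the $O(1)$ parts with $\hat{\Si}_D^k$ and $\K_D^*$, splitting $\ln(k|x-y|)=\ln k+\ln|x-y|$ and then $k=\omega/v$, $k_b=\omega/v_b$ to separate the $\omega^2\ln\omega$ and $\omega^2$ blocks, with the $\delta$-dependence confined to the $(2,2)$ entry --- is exactly the standard derivation in those references, with the correct constants $\eta_k$, $b_1$, $c_1$. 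Deferring the remainder bounds to the standard $L^2(\D)\to H^1(\D)$ and $L^2(\D)\to L^2(\D)$ mapping properties of layer potentials on $C^{1,s}$ boundaries is likewise how the literature handles it, so nothing essential is missing.
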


The operator $\Si_{D}$ is the Laplace single layer potential associated with $D$. Since we are working in two dimensions this is not generally invertible however the following two lemmas help us understand the extent of its degeneracy.
\begin{lemma} \label{eq:l0_inj}
	If for some $\phi\in L^2(\D)$ with $\int_{\D}\phi=0$ it holds that $\Si_D[\phi](x)=0$ for all $x\in\D$, then $\phi=0$ on $\D$.
\end{lemma}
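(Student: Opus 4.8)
The plan is to relate $\Si_D[\phi]$ to a harmonic function that must vanish identically, and then to recover $\phi$ from a jump relation. Set $u:=\Si_D[\phi]$. Then $u$ is harmonic in $\mathbb{R}^2\setminus\D$, is continuous across $\D$ (the single layer potential has no jump), and by hypothesis $u=0$ on $\D$. On each bounded, simply connected component $D_n$ the function $u$ is harmonic with vanishing Dirichlet data, so uniqueness for the interior Dirichlet problem gives $u\equiv 0$ in $D$; in particular $\ddp{u}{\nu}\big|_-=0$ on $\D$.

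The exterior is where the hypothesis $\int_\D\phi=0$ is essential. Using the expansion $\ln|x-y|=\ln|x|+O(|x|^{-1})$ uniformly in $y\in\D$, one finds
\[
u(x)=\frac{\ln|x|}{2\pi}\int_\D\phi\,d\sigma+O(|x|^{-1})=O(|x|^{-1})\qquad\text{as }|x|\to\infty,
\]
so $u$ is a harmonic function on $\outside$ that vanishes on $\D$ and decays at infinity. Applying the maximum principle on the annular regions $B_R\setminus\overline{D}$ and letting $R\to\infty$ forces $u\equiv 0$ in $\outside$, and hence $\ddp{u}{\nu}\big|_+=0$ on $\D$.

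Finally, the jump relation $\ddp{}{\nu}\Si_D[\phi]\big|_+-\ddp{}{\nu}\Si_D[\phi]\big|_-=\phi$ on $\D$, together with the two vanishing one-sided normal derivatives, yields $\phi=0$, as desired. The one genuinely delicate point is the exterior step: one needs that a harmonic function on the unbounded domain that is zero on $\D$ and tends to zero at infinity must vanish identically, and this is precisely where the $O(|x|^{-1})$ decay — a consequence of $\int_\D\phi=0$ — is used. That the zero-mean condition cannot be dropped is already visible for a single disk of radius one, where $\Si_D[1]\equiv 0$ on $\overline{D}$ although $1\not\equiv 0$.
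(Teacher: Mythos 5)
Your proof is correct and is essentially the argument the paper defers to: the paper's own ``proof'' is only a citation of \cite[Lemma~2.25]{ammari2007polarization}, and the argument there is exactly yours --- interior Dirichlet uniqueness, decay at infinity from the zero-mean condition, the maximum principle on the exterior, and recovery of $\phi$ from the jump of the normal derivative of $\Si_D[\phi]$, all of which generalise verbatim to a finite union of components. Your closing remark that the unit disk shows the mean-zero hypothesis cannot be dropped is a nice touch, as it is precisely the paper's Case~I where $\dim\ker\Si_D=1$.
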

\begin{proof}
	The arguments given in \cite[Lemma 2.25]{ammari2007polarization} can be easily generalised to the case where $D$ is the disjoint union of a finite number of bounded Lipschitz domains in $\mathbb{R}^2$.
\end{proof}

\begin{prop} \label{dim_ker_Si}
	Independent of the number $N\in\mathbb{N}$ of connected components making up $D$, we have that 
	\begin{equation*}
	\dim\ker\Si_D\leq1.
	\end{equation*}
\end{prop}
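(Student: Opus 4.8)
The plan is to deduce this directly from \Cref{eq:l0_inj}. The guiding observation is that the only way the two-dimensional Laplace single layer potential can fail to be injective is through the logarithmic growth of $\Si_D[\varphi]$ at infinity, and that growth is governed by the \emph{single} scalar $\int_{\D}\varphi\,d\sigma$ --- irrespective of how many connected components $D$ has.

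Concretely, I would introduce the bounded linear functional $\ell:L^2(\D)\to\mathbb{C}$, $\ell(\varphi):=\int_{\D}\varphi\,d\sigma$. Since $\D$ has finite measure, $\ell$ is just integration against the constant function $1\in L^2(\D)$, so $\ell\neq0$ and $\ker\ell$ is a closed subspace of codimension $1$. \Cref{eq:l0_inj} says precisely that any $\varphi\in\ker\Si_D$ with $\ell(\varphi)=0$ must vanish, i.e.\ $\ker\Si_D\cap\ker\ell=\{0\}$. Hence the restriction $\ell|_{\ker\Si_D}$ is an injective linear map from $\ker\Si_D$ into the one-dimensional space $\mathbb{C}$, which forces $\dim\ker\Si_D\leq1$.

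In the write-up I would add a sentence on why the number $N$ of components is irrelevant: on the interior, each $D_n$ is a bounded domain on which $\Si_D[\varphi]$ is harmonic with zero trace, so it vanishes there by the maximum principle, \emph{componentwise}; but the exterior region $\outside$ is connected, so only its single degree of freedom at infinity can obstruct injectivity. I do not expect any real obstacle beyond \Cref{eq:l0_inj} itself, which is where the analytic content sits (exterior Dirichlet uniqueness on $C^{1,s}$ domains, together with the decay $\Si_D[\varphi](x)=O(|x|^{-1})$ when $\ell(\varphi)=0$). If one preferred to avoid quoting \Cref{eq:l0_inj}, a self-contained variant is: take $\varphi_1,\varphi_2\in\ker\Si_D$, put $c_i:=\tfrac{1}{2\pi}\ell(\varphi_i)$ and $u_i:=\Si_D[\varphi_i]$, and note that $w:=c_2u_1-c_1u_2$ is harmonic in $\outside$, vanishes on $\D$, and decays at infinity (the $\ln|x|$ terms cancel); exterior uniqueness gives $w\equiv0$ outside, and then the jump relation for the normal derivative of the single layer potential, combined with $u_i\equiv0$ in $D$, yields $c_1\varphi_2=c_2\varphi_1$ --- so $\varphi_1$ and $\varphi_2$ are linearly dependent, giving $\dim\ker\Si_D\leq1$ directly.
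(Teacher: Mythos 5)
Your proposal is correct and is essentially the paper's own argument: the observation that \Cref{eq:l0_inj} gives $\ker\Si_D\cap\ker\ell=\{0\}$, so that the mean-value functional $\ell$ restricts to an injection of $\ker\Si_D$ into $\mathbb{C}$, is exactly what the paper implements concretely by normalising two kernel elements by their integrals and subtracting. The abstract phrasing (and the optional self-contained variant via exterior uniqueness) is a clean repackaging, not a different route.
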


\begin{proof}
	Let $\psi\in\ker\Si_D$. Then by Lemma \ref{eq:l0_inj} if $\int_{\D}\psi=0$ then $\psi=0$. Suppose that $\int_{\D}\psi\neq0$ then take $\tilde{\psi}\in\ker\Si_D$ with $\int_{\D}\tilde{\psi}\neq0$ and then consider the function
	\begin{equation*}
	f=\frac{\psi}{\int_{\D}\psi}-\frac{\tilde{\psi}}{\int_{\D}\tilde{\psi}}.
	\end{equation*}
	Then $f$ satisfies $\Si_D[f]=0$ and $\int_{\D}f=0$ so by Lemma  \ref{eq:l0_inj} we have that $f=0$. Therefore $\psi=(\int_{\D}\psi/\int_{\D}\tilde{\psi})\tilde{\psi}$.
\end{proof}

There are two cases to consider, in light of \Cref{dim_ker_Si}:
\begin{itemize}
	\itemsep0em 
	\item Case I: $\dim\ker\Si_D=1$,
	\item Case II: $\dim\ker\Si_D=0$.
\end{itemize}
\noindent By the Fredholm Alternative Theorem, an equivalent formulation is:
\begin{itemize}
	\itemsep0em 
	\item Case I: $\Si_D$ is not invertible,
	\item Case II: $\Si_D$ is invertible,
\end{itemize}
\noindent as an operator in $\mathcal{L}(L^2(\D),H^1(\D))$. We are now in a position to prove an important property of the operator $\hat{\Si}_D^k$ that was defined in \Cref{lem:expansion} and is the leading order approximation to $\Si_D^k$ as $k\to0$.

\begin{lemma} \label{thm:skD_invertible}
	For any fixed $k\in\mathbb{C}\setminus\{z\in\mathbb{C}:\Re(z)=0,\Im(z)\geq0\}$, $\hat{\Si}_D^k$ is invertible in $\mathcal{L}(L^2(\D),H^1(\D))$.
\end{lemma}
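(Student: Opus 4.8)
The plan is to reduce the invertibility of $\hat{\Si}_D^k$ to the two cases identified after \Cref{dim_ker_Si}, using the Fredholm alternative in each. Since $\hat{\Si}_D^k = \Si_D + \eta_k \int_{\D}(\,\cdot\,)\,d\sigma$ is a rank-one perturbation of the Laplace single layer potential $\Si_D$, and $\Si_D$ is a Fredholm operator of index zero in $\mathcal{L}(L^2(\D),H^1(\D))$, the operator $\hat{\Si}_D^k$ is also Fredholm of index zero. Hence it suffices to show that $\hat{\Si}_D^k$ is injective; surjectivity then follows automatically.

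In Case II ($\Si_D$ invertible) the argument is short: one writes $\hat{\Si}_D^k = \Si_D(Id + \eta_k \Si_D^{-1}[\mathbbm{1}_{\D}]\otimes\cdots)$, or more concretely, if $\hat{\Si}_D^k[\phi]=0$ then $\Si_D[\phi] = -\eta_k(\int_{\D}\phi)\,$(constant), and one checks that $\int_{\D}\phi$ must vanish — perhaps by integrating an appropriate identity against $\Si_D^{-1}[1]$, or by using a standard fact about $\int_{\D}\Si_D^{-1}[1]$ being nonzero with the right sign so that the equation $\Si_D[\phi]=$ const forces $\phi=0$ unless the constant matches a specific eigenrelation that $\eta_k$ avoids for the allowed range of $k$. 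In Case I ($\dim\ker\Si_D=1$), let $\psi_0$ span the kernel. By \Cref{eq:l0_inj} we must have $\int_{\D}\psi_0\neq 0$ (else $\psi_0=0$), so normalise $\int_{\D}\psi_0=1$. If $\hat{\Si}_D^k[\phi]=0$, then $\Si_D[\phi]$ is a constant $c$; comparing with $\Si_D$ acting on a suitable combination of $\phi$ and $\psi_0$ one decomposes $\phi = \alpha\psi_0 + \phi^\perp$ with $\Si_D[\phi^\perp]$ constant and $\int\phi^\perp$ controlled, reducing to a scalar equation in $\alpha$ whose coefficient involves $\eta_k$ and the quantity $\Si_D[\psi_0]$ (a constant, by the theory of the single layer potential in 2D — its value is related to the logarithmic capacity of $D$). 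The condition on $k$ — namely $k$ not on the ray $\{\Re z = 0, \Im z\geq 0\}$ — is exactly what guarantees $\eta_k = \frac{1}{2\pi}(\ln k + \gamma - \ln 2) - \frac{i}{4}$ does not hit the one forbidden value that would make this scalar equation degenerate; one should track where $\ln k$ lands and verify the imaginary part $-\tfrac14$ keeps $\eta_k$ off the bad locus.

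I expect the main obstacle to be the Case I scalar computation: identifying precisely which value of $\eta_k$ is excluded and confirming that the excised ray in the $k$-plane maps exactly to that exclusion. This requires knowing the constant $\Si_D[\psi_0]$ (equivalently, a capacity-type constant for $D$) and checking a sign/nonvanishing condition; the two-dimensional logarithmic behaviour is what makes this delicate, in contrast to the three-dimensional case where $\Si_D$ is outright invertible. The Fredholm-index bookkeeping and Case II are routine by comparison, and \Cref{eq:l0_inj} together with \Cref{dim_ker_Si} does most of the structural work of pinning down $\ker\Si_D$.
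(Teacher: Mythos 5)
Your skeleton --- $\hat{\Si}_D^k$ is Fredholm of index zero, so reduce to injectivity, then split according to the two cases following \Cref{dim_ker_Si} --- is exactly the paper's, and your Case II is essentially the paper's argument: from $\hat{\Si}_D^k[y]=0$ one gets $y=c\,\Si_D^{-1}[\chi_{\D}]$ and the scalar equation $c\bigl(1+\eta_k\int_{\D}\Si_D^{-1}[\chi_{\D}]\bigr)=0$. One correction there: what you need about $\int_{\D}\Si_D^{-1}[\chi_{\D}]$ is not that it is ``nonzero with the right sign'' but simply that it is \emph{real}, so that the bracket has imaginary part $\bigl(\int_{\D}\Si_D^{-1}[\chi_{\D}]\bigr)\Im\eta_k$, which is nonzero (and the bracket equals $1$ if that integral happens to vanish). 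Here $\Im\eta_k=\tfrac{1}{2\pi}\arg k-\tfrac14$, which is nonzero precisely because the excluded ray is $\arg k=\pi/2$; so your reading of the hypothesis on $k$ is correct, and there is no further computation to ``track''.

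The genuine gap is Case I, which you yourself flag as the main obstacle and for which your sketch points in a wrong direction. If $\psi_0$ spans $\ker\Si_D$ then $\Si_D[\psi_0]=0$ by definition of the kernel --- it is \emph{not} a capacity-type constant (you are thinking of the equilibrium density, for which $\Si_D[\psi_{eq}]$ is constant on $\D$; Case I is exactly the degenerate situation where that constant is zero). So a scalar equation in $\alpha$ whose coefficient ``involves $\Si_D[\psi_0]$'' collapses and gives you nothing. The missing ingredient is the self-adjointness of $\Si_D$ on $L^2(\D)$ (its kernel $\tfrac{1}{2\pi}\ln|x-y|$ is real and symmetric), which gives $\operatorname{range}(\Si_D)\perp\ker(\Si_D)$. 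Pairing $\hat{\Si}_D^k[y]=\Si_D[y]+\eta_k\int_{\D}y=0$ against $\psi_0$ (normalised so that $\int_{\D}\psi_0=1$, which is possible by \Cref{eq:l0_inj}) annihilates the $\Si_D[y]$ term and leaves $\eta_k\int_{\D}y=0$; since $\eta_k\neq0$ this forces $\int_{\D}y=0$, whence $\Si_D[y]=0$ and $y=0$ by \Cref{eq:l0_inj}. No decomposition of $y$, no scalar equation in $\alpha$, and no capacity constant is needed.
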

\begin{proof}
	Since $\hat{\Si}_D^k$ is Fredholm with index 0 we need only to show that it is injective. To this end, assume that $y\in L^2(\D)$ is such that
	\begin{equation} \label{eq:Shat_inj}
	\hat{\Si}_D^k[y]=\Si_D[y]+\eta_k\int_{\D}y=0.
	\end{equation}

	\noindent	\underline{\text{Case I}:}
	Let $\psi_0$ be the unique element of $\ker\Si_D$ with $\int_{\D}\psi_0=1$ (which exists as a result of \Cref{eq:l0_inj}). We then find that $\Si_D[y]\perp\psi_0$ in $L^2(\D)$ and hence \eqref{eq:Shat_inj} becomes
	\begin{equation*}
	\eta_k\left(\int_{\D}y\right)\left(\int_{\D}\psi_0\right)=0.
	\end{equation*} Thus $\int_{\D}y=0$. It follows from \eqref{eq:Shat_inj} that $\Si_D[y]=0$ and further by Lemma \ref{eq:l0_inj} we have that $y=0$.
	
	\noindent	\underline{\text{Case II}:}
	Define $\psi_0=\Si_D^{-1}(1)$ then \eqref{eq:Shat_inj} gives us that
	\begin{equation*}
	\Si_D[y]=-\eta_k\int_{\D}y,
	\end{equation*}
	is constant so, since $\Si_D$ is injective, we find that $y=c\psi_0$ for some $c$.
	Substituting back into \eqref{eq:Shat_inj} gives 
	\begin{equation*}
	c\left(1+\eta_{k}\int_{\D}\psi_0\right)=0.
	\end{equation*}
	Everything within the brackets is real with the one exception of $\eta_k$ (which has nonzero imaginary part, thanks to the choice of $k$) so we must have that $c=0$.
\end{proof}

\subsection{Resonant modes} \label{subsec:resonant_modes}

\begin{defn} \label{defn:resonance}
	For a fixed $\delta$ we define a \underline{resonant frequency} to be $\omega\in\mathbb{C}$ with positive real part and negative imaginary part such that there exists a nontrivial solution to
\begin{equation} \label{eq:res}
\A(\omega,\delta)
\begin{pmatrix}
\phi \\ \psi
\end{pmatrix}
=
\begin{pmatrix}
0 \\ 0
\end{pmatrix},
\end{equation}
where $\A(\omega,\delta)$ is defined in \eqref{eq:A_matrix_defn}.
For each resonant frequency $\omega$ we define the corresponding \underline{eigenmode} (or \underline{resonant mode} or \underline{normal mode}) as
\begin{equation} 
u = \begin{cases}
\Si_{D}^k[\psi](x), & x\in\mathbb{R}^2\backslash D,\\
\Si_{D}^{k_b}[\phi](x), & x\in D.
\end{cases}
\end{equation}
\end{defn}

\begin{remark}
	The reason for the choices of sign in \Cref{defn:resonance} is to give a physical meaning to a complex resonant frequency. The real part represents the frequency of oscillation and the imaginary part describes the rate of attenuation (hence it should be negative, to give a solution that decays over time).
\end{remark}
\begin{remark}
	We will see from \Cref{fig:frequency_response_N=6} that \Cref{defn:resonance} is equivalent to the notion that resonant frequencies are those at which the system will oscillate at much greater amplitude than is generally the case.
\end{remark}

We wish to now compute the resonant frequencies and associated eigenmodes for our system. Manipulating the first entry of \eqref{eq:res} we find that
\begin{equation*}
\hat{\Si}_D^{k_b}[\phi]-\hat{\Si}_D^{k}[\psi]
=\hat{\Si}_D^{k}[\phi-\psi]+\frac{1}{2\pi}\ln \frac{v}{v_b}\int_{\D}\phi,
\end{equation*}
hence
\begin{equation} \label{eq:phi_expression}
\psi = \phi+\frac{1}{2\pi}\ln \frac{v}{v_b}\left(\int_{\D}\phi\right) (\hat{\Si}_D^k)^{-1}[\chi_{\D}]+O(\omega^2),
\end{equation}
since an application of $(\hat{\Si}_D^k)^{-1}$ rescales like $O(1/\ln\omega)$. Here, $\chi_{\D}$ is used to denote the characteristic function of $\D$.

To deal with the second component of \eqref{eq:res} we first prove some technical lemmas.

\begin{lemma} \label{lem:K1}
	For any $\phi\in L^2(\D)$ and $j=1,\ldots,N$, we have that
	
	(i) $\int_{\D_j}(\frac{1}{2}I-\K_D^*)[\phi] = 0$,
	
	(ii) $\int_{\D_j}(\frac{1}{2}I+\K_D^*)[\phi] = \int_{\D_j}\phi$.
\end{lemma}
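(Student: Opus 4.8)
The plan is to read both identities off from the classical jump relations for the Laplace single layer potential $\Si_D$, combined with an application of the divergence theorem on each connected component $D_j$. First I would recall what I need about $u:=\Si_D[\phi]$ for $\phi\in L^2(\D)$: that it is harmonic in $D$ and in $\outside$, is continuous across $\D$, and has the normal-derivative jumps
\begin{equation*}
\ddp{u}{\nu}\Big|_{\pm} = \left(\pm\tfrac{1}{2} I + \K_D^*\right)[\phi] \qquad\text{on }\D,
\end{equation*}
where $\K_D^*$ is the Neumann--Poincar\'e operator at wavenumber $0$; under the standing $C^{1,s}$ assumption on each $\D_n$ these hold in the $L^2(\D)$ sense \cite{ammari2009layer, ammari2004boundary}.

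For part (i), I would fix $j\in\{1,\dots,N\}$ and note that, since the kernel $\tfrac{1}{2\pi}\ln|x-y|$ is harmonic in $x$ away from $y$ and $\D$ is disjoint from the open set $D_j$, the function $u$ is harmonic throughout $D_j$ (with $u\in H^1(D_j)$ and interior normal trace in $L^2(\D_j)$). Green's identity on $D_j$ then gives $\int_{\D_j}\ddp{u}{\nu}|_-\,d\sigma=\int_{D_j}\Delta u\,dx=0$, and substituting $\ddp{u}{\nu}|_-=(-\tfrac{1}{2} I+\K_D^*)[\phi]$ yields $\int_{\D_j}(\tfrac{1}{2} I-\K_D^*)[\phi]=0$, which is exactly (i). For part (ii), I would subtract the two jump relations to get $\ddp{u}{\nu}|_+-\ddp{u}{\nu}|_-=\phi$ on $\D$, integrate over $\D_j$, and use part (i) to discard the $\int_{\D_j}(-\tfrac{1}{2} I+\K_D^*)[\phi]$ term, leaving precisely $\int_{\D_j}(\tfrac{1}{2} I+\K_D^*)[\phi]=\int_{\D_j}\phi$.

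The only step that needs genuine care — and hence the main obstacle, though a mild one — is justifying the divergence-theorem computation for a single layer potential carrying an $L^2(\D)$ density: that $u\in H^1(D_j)$ and that its normal derivative is attained as an $L^2(\D_j)$ trace, so that Green's identity is legitimate. This is standard layer-potential theory for $C^{1,s}$ (indeed Lipschitz) boundaries and I would simply cite it. As an alternative route for (ii) that sidesteps the interior computation, one can instead apply Green's identity on $B_R\setminus\overline{D}$ for large $R$, using the exterior harmonicity of $u$ together with the far-field behaviour $u(x)=\tfrac{1}{2\pi}\big(\int_{\D}\phi\big)\ln|x|+O(|x|^{-1})$ as $|x|\to\infty$, whose leading $\ln|x|$ term contributes zero net flux through $\partial B_R$; but deducing (ii) directly from (i) as above is shorter and cleaner.
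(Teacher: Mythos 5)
Your proof is correct and follows exactly the route the paper sketches: the jump relations for the single layer potential together with the harmonicity of $\Si_D[\phi]$ in each $D_j$ give (i) via Green's identity, and (ii) then follows by subtracting the two jump relations. The paper's proof is a two-line citation of these same facts; yours simply spells out the details.
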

\begin{proof}
	(i) follows from the jump relations for single layer potentials and the fact $\Si_D[\phi]$ is harmonic in $D$ \cite{ammari2009layer, ammari2007polarization}. Then (ii) is immediate.
\end{proof}

\begin{lemma} \label{lem:K2}
	For any $\phi\in L^2(\D)$ and $j=1,\ldots,N$, we have that
	
	(i) $\int_{\D_j} \K_{D,1}^{(1)}[\phi]=4b_1|D_j|\int_{\D} \phi$,
	
	(ii) $\int_{\D_j}\K_{D,1}^{(2)}[\phi] = -\int_{D_j} \Si_D[\phi] + (4b_1+4c_1)|D_j| \int_{\D}\phi $,
	
	where $|D_i|$ is the area of $D_i$.
\end{lemma}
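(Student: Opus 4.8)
The plan is to prove both identities by the same two-step recipe: interchange the order of integration so that the outer integral over $\D_j$ sees only the kernel, then evaluate that inner surface integral by the divergence theorem on the component $D_j$ (whose outward normal agrees with $\nu$ on $\D_j$). For $(i)$, Fubini is legitimate because $\frac{\partial|x-y|^2}{\partial\nu(x)}$ is bounded and $\phi\in L^2(\D)\subset L^1(\D)$, so the claim reduces to showing $\int_{\D_j}\frac{\partial|x-y|^2}{\partial\nu(x)}\,d\sigma(x)=4|D_j|$. This follows from $\nabla_x|x-y|^2=2(x-y)$ and
\begin{equation*}
\int_{\D_j}\frac{\partial|x-y|^2}{\partial\nu(x)}\,d\sigma(x)=\int_{D_j}\Delta_x|x-y|^2\,dx=\int_{D_j}4\,dx=4|D_j|;
\end{equation*}
pulling the constant $4b_1|D_j|$ out of the remaining $y$-integral then gives $(i)$.

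For $(ii)$ the one new ingredient is the radial Laplacian computation: writing $r=|x-y|$,
\begin{equation*}
\Delta_x\big(r^2\ln r\big)=\big(r^2\ln r\big)''+\tfrac1r\big(r^2\ln r\big)'=(2\ln r+3)+(2\ln r+1)=4\ln|x-y|+4,
\end{equation*}
valid for $x\neq y$. Since $r^2\ln r$ and its gradient are continuous (the gradient vanishing as $r\to0$) and the classical Laplacian $4\ln r+4$ is locally integrable, the distributional Laplacian coincides with the classical one, so the divergence theorem still yields
\begin{equation*}
\int_{\D_j}\frac{\partial\big(|x-y|^2\ln|x-y|\big)}{\partial\nu(x)}\,d\sigma(x)=4\int_{D_j}\ln|x-y|\,dx+4|D_j|.
\end{equation*}
Substituting this and the $(i)$ computation into the swapped double integral for $\int_{\D_j}\K_{D,1}^{(2)}[\phi]$, the two $4|D_j|$ contributions (from the $b_1$- and $c_1$-parts) combine into $(4b_1+4c_1)|D_j|\int_{\D}\phi$, while the remaining piece $4b_1\int_{\D}\phi(y)\int_{D_j}\ln|x-y|\,dx\,d\sigma(y)$ becomes, after swapping back and recognising $\Si_D[\phi](x)=\tfrac1{2\pi}\int_{\D}\ln|x-y|\phi(y)\,d\sigma(y)$, exactly $8\pi b_1\int_{D_j}\Si_D[\phi]$, which equals $-\int_{D_j}\Si_D[\phi]$ since $b_1=-\tfrac1{8\pi}$. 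This proves $(ii)$.

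No step is genuinely deep here: the proof amounts to the two Laplacian computations together with careful bookkeeping of the constants $b_1$, $c_1$ and the normalising factor $\tfrac1{2\pi}$ in $\Si_D$. The only point requiring care — the mild obstacle — is the use of the divergence theorem in $(ii)$ when $y\in\D_j$, where $|x-y|^2\ln|x-y|$ is merely $C^1$; this is handled by excising a small disc about $y$ and letting its radius tend to $0$, the circular contribution vanishing because $\nabla_x(r^2\ln r)\to0$, and the same integrability observation simultaneously justifies the two applications of Fubini.
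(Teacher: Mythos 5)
Your proof is correct and follows essentially the same route as the paper: interchange the integrals and use the divergence theorem to replace $\int_{\D_j}\frac{\partial}{\partial\nu(x)}(\cdot)\,d\sigma(x)$ by $\int_{D_j}\Delta_x(\cdot)\,dx$, with $\Delta_x|x-y|^2=4$ and $\Delta_x\left(|x-y|^2\ln|x-y|\right)=4\ln|x-y|+4$ producing the stated constants and the $-\int_{D_j}\Si_D[\phi]$ term via $b_1=-\tfrac{1}{8\pi}$. Your additional justification of the divergence theorem near the $C^1$ singularity at $x=y$ is a detail the paper leaves implicit, but it does not change the argument.
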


\begin{proof}
	(i) follows from the divergence theorem 
	\begin{align*}
	\int_{\D_j} \K_{D,1}^{(1)}[\phi](x)\,d\sigma(x)
	&= b_1 \int_{D_j}\int_{\D}\Delta_x|x-y|^2\phi(y) \,d\sigma(y)\,dx \\
	&= 4b_1 |D_j| \int_{\D}\phi(y)\,d\sigma(y).
	\end{align*}
	
	Similarly for (ii) we can show that
	\begin{align*}
	\int_{\D_j}K_{D,1}^{(2)}[\phi](x)\,d\sigma(x)
	&= \int_{D_j}\int_{\D}\Delta_x [|x-y|^2(b_1 \ln |x-y|+c_1) ]\phi(y) \,d\sigma(y)\,dx \\
	&= -\int_{D_j} \Si_D[\phi](x)\,dx + (4b_1+4c_1)|D_j| \int_{\D}\phi(y) \,d\sigma(y),
	\end{align*}
	making use of the fact that $b_1=-1/8\pi$.
\end{proof}

Turning now to the second component of \eqref{eq:res} we see that
\begin{equation*}
\begin{split}
\left(-\frac{1}{2}Id+\K_D^{*} + v_b^{-2}\K_{D,1}^{(1)}\omega^2\ln\omega + v_b^{-2}(-\ln v_b\K_{D,1}^{(1)}+\Si_{D,1}^{(2)})\omega^2\right)[\phi]
\\-\delta(\frac{1}{2}Id+\K_D^{*})[\psi]=O(\delta\omega^2\ln\omega)+O(\omega^4\ln\omega).
\end{split}
\end{equation*}
We substitute expression \eqref{eq:phi_expression} for $\psi$ to see that $\phi$ satisfies the equation
\begin{equation} \label{eq:phi}
\begin{split}
\left(-\frac{1}{2}Id+\K_D^{*}\right)[\phi] +
\left( v_b^{-2}\K_{D,1}^{(1)}\omega^2\ln\omega + v_b^{-2}(-\ln v_b\K_{D,1}^{(1)}+\K_{D,1}^{(2)})\omega^2\right)[\phi]
\\-\delta(\frac{1}{2}Id+\K_D^{*})[\phi]-\frac{1}{2\pi}\delta \ln \frac{v}{v_b}\left(\int_{\D}\phi\right) \left(\frac{1}{2}Id+\K_D^{*}\right)\left[ (\hat{\Si}_D^k)^{-1}[\chi_{\D}]\right]\\
=O(\delta\omega^2\ln\omega)+O(\omega^4\ln\omega).
\end{split}
\end{equation}

At leading order \eqref{eq:phi} is just $(-\frac{1}{2}Id+\K_D^{*})[\phi]=0$ so it would be useful to understand this kernel, which we achieve with the following two lemmas.

\begin{lemma} \label{ker_Si_constant}
	If $\phi\in L^2(\D)$ is such that $\phi\in\ker(-\frac{1}{2}Id+\K_D^*)$ then there exist constants $b_j$ 
	such that $\Si_D[\phi]=\sum_{j=1}^N b_{j}\mathcal{X}_{\D_j}$.
\end{lemma}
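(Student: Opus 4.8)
The plan is to read the hypothesis through the jump relations for the single layer potential and then invoke uniqueness for the interior Neumann problem on each connected component. First I would set $u:=\Si_D[\phi]$. By the mapping properties of $\Si_D$ recalled after \Cref{lem:expansion}, $u$ lies in $H^1$ near $\D$ (and in $H^1_{loc}(\mathbb{R}^2)$), is harmonic in $D$ and in $\outside$, and is continuous across $\D$. The relevant fact is the standard normal-derivative jump formula $\ddp{}{\nu_x}\Si_D[\phi]\big|_\pm = \bigl(\pm\tfrac12 Id+\K_D^{*}\bigr)[\phi]$ on $\D$. Hence the assumption $\phi\in\ker(-\tfrac12 Id+\K_D^{*})$ says precisely that the interior normal derivative $\ddp{u}{\nu}\big|_-$ vanishes identically on $\D$.

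Next, I would fix $j\in\{1,\dots,N\}$ and work on the single bounded, simply connected component $D_j$. Since $u$ is harmonic in $D_j$ with $\ddp{u}{\nu}\big|_-=0$ on $\D_j$, Green's first identity gives $\int_{D_j}|\nabla u|^2\,dx=\int_{\D_j}u\,\ddp{u}{\nu}\big|_-\,d\sigma=0$, so $\nabla u\equiv 0$ on the connected set $D_j$, i.e.\ $u\equiv b_j$ on $D_j$ for some constant $b_j\in\mathbb{C}$. Finally, because $\Si_D[\phi]$ has no jump across $\D$, the trace of $u$ from inside $D_j$ coincides with $u|_{\D_j}$, so $\Si_D[\phi]=b_j$ on $\D_j$; summing over $j=1,\dots,N$ yields $\Si_D[\phi]=\sum_{j=1}^N b_j\mathcal{X}_{\D_j}$ on $\D$, which is the claim.

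I do not expect a genuine obstacle here: the argument is a direct translation of the kernel condition via the jump relations followed by the classical energy identity. The only points deserving a word of care are the regularity needed to make the boundary traces and Green's identity legitimate (guaranteed by $\Si_D\in\mathcal{L}(L^2(\D),H^1(\D))$ and harmonicity in each $D_j$), and the use of connectedness of each $D_j$ to upgrade ``$\nabla u=0$'' to a single constant per component; the number $N$ of components plays no role beyond indexing the constants $b_j$.
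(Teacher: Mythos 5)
Your argument is correct and follows the same route as the paper: identify the kernel condition with the vanishing interior normal derivative via the jump relation, then conclude that $u=\Si_D[\phi]$ solves a homogeneous interior Neumann problem on each connected component and is therefore constant there. The only difference is that you spell out the uniqueness step with Green's identity, whereas the paper simply cites it as known.
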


\begin{proof}
	Let $u:=\Si_D[\phi]$. Then $\Delta u=0$ in $D$ and $\ddp{u}{\nu}\big|_-=(-\frac{1}{2}Id+\K_D^*)[\phi]=0$ on $\D$ (known as a ``jump condition'') \cite{ammari2004boundary,ammari2009layer} so $u$ satisfies a homogeneous interior Neumann problem on each of the $N$ connected components $D_1,\dots,D_N$ of $D$. It is known that such problems are uniquely solvable up to the addition of a constant.
\end{proof}

\begin{lemma} \label{kernel_Ndim}
	Fix some ${k_0}\in\mathbb{C}\setminus\{0\}$. The set of vectors $\{\psi_1,\ldots,\psi_N\}$ defined as
	\begin{equation} \label{defn:basis_vectors}
	\psi_i:=\left(\hat{\Si}_D^{k_0}\right)^{-1}[\mathcal{X}_{\D_i}],
	\end{equation}
	forms a basis for the space $\ker(-\frac{1}{2}Id+\K_D^*)$.
\end{lemma}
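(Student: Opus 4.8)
The plan is to show that the $N$ vectors $\psi_i := (\hat{\Si}_D^{k_0})^{-1}[\mathcal{X}_{\D_i}]$ are (a) linearly independent, (b) contained in $\ker(-\frac12 Id + \K_D^*)$, and (c) span that kernel — which, combined, gives that the kernel is exactly $N$-dimensional with this explicit basis. The key structural fact underpinning everything is \Cref{ker_Si_constant}: $\phi$ lies in the kernel precisely when $\Si_D[\phi]$ is a function that is \emph{constant on each connected component} $D_j$, i.e. $\Si_D[\phi] = \sum_j b_j \mathcal{X}_{\D_j}$ for some constants $b_j$. The strategy is to pull everything back through $\hat{\Si}_D^{k_0}$ to reduce the problem to a statement about the range of $\Si_D$ versus the span of the characteristic functions $\mathcal{X}_{\D_1},\dots,\mathcal{X}_{\D_N}$.

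\medskip

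First I would verify membership: for each $i$, $\hat{\Si}_D^{k_0}[\psi_i] = \mathcal{X}_{\D_i}$ by definition, and since $\hat{\Si}_D^{k_0}[\psi_i] = \Si_D[\psi_i] + \eta_{k_0}\int_{\D}\psi_i$, the function $\Si_D[\psi_i]$ equals $\mathcal{X}_{\D_i}$ minus a constant, hence is itself piecewise constant on the components. Then one checks, via the jump relation $\ddp{}{\nu}\Si_D[\psi_i]\big|_- = (-\frac12 Id + \K_D^*)[\psi_i]$ (used in the proof of \Cref{ker_Si_constant}), that the normal derivative from inside vanishes because $\Si_D[\psi_i]$ is locally constant, so $\psi_i \in \ker(-\frac12 Id + \K_D^*)$. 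Linear independence follows because $\hat{\Si}_D^{k_0}$ is a bijection (\Cref{thm:skD_invertible}, valid since $k_0 \neq 0$ can be chosen with the required sign/imaginary-part condition — or one simply invokes invertibility for an admissible $k_0$) and the images $\mathcal{X}_{\D_1},\dots,\mathcal{X}_{\D_N}$ are linearly independent in $H^1(\D)$ as they have disjoint supports; an invertible operator carries a linearly independent set to a linearly independent set.

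\medskip

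For the spanning statement, take any $\phi \in \ker(-\frac12 Id + \K_D^*)$. By \Cref{ker_Si_constant} there are constants $b_j$ with $\Si_D[\phi] = \sum_{j=1}^N b_j \mathcal{X}_{\D_j}$. I want to produce scalars $c_j$ with $\phi = \sum_j c_j \psi_j$. Applying $\hat{\Si}_D^{k_0}$ this is equivalent to $\hat{\Si}_D^{k_0}[\phi] = \sum_j c_j \mathcal{X}_{\D_j}$, i.e. $\Si_D[\phi] + \eta_{k_0}(\int_{\D}\phi)\chi_{\D} = \sum_j c_j \mathcal{X}_{\D_j}$. Since $\Si_D[\phi]$ is already $\sum_j b_j \mathcal{X}_{\D_j}$ and $\chi_{\D} = \sum_j \mathcal{X}_{\D_j}$, the right-hand side matches with the choice $c_j := b_j + \eta_{k_0}\int_{\D}\phi$. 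Thus $\hat{\Si}_D^{k_0}[\phi - \sum_j c_j \psi_j] = 0$, and injectivity of $\hat{\Si}_D^{k_0}$ forces $\phi = \sum_j c_j \psi_j$. Hence $\{\psi_1,\dots,\psi_N\}$ spans the kernel, and together with independence it is a basis.

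\medskip

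The one point requiring care — the \textbf{main obstacle} — is the reduction via \Cref{ker_Si_constant}: it only asserts the \emph{existence} of the constants $b_j$, and one must make sure the uniqueness-up-to-a-constant phrasing of the interior Neumann problem is being used correctly on \emph{each} component independently (so there are genuinely $N$ free constants, not one). This is precisely what makes the kernel $N$-dimensional rather than $1$-dimensional, and it is the feature that distinguishes this multi-component setting from the single-resonator case. A secondary subtlety is confirming that an admissible $k_0$ for \Cref{thm:skD_invertible} exists — this is immediate since the excluded set $\{\Re z = 0, \Im z \geq 0\}$ is a half-line, leaving $k_0$ free to be chosen in $\mathbb{C}\setminus\{0\}$; no further constraint is imposed, so the basis vectors are well-defined. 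Everything else is a routine application of invertibility and the single-layer jump relations.
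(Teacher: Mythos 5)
Your proof is correct and follows essentially the same route as the paper: linear independence via the injectivity of $\hat{\Si}_D^{k_0}$, and the spanning property by combining \Cref{ker_Si_constant} with the fact that $\hat{\Si}_D^{k_0}[\phi]$ and $\Si_D[\phi]$ differ by a constant, so that $\hat{\Si}_D^{k_0}[\phi]\in\spn\{\mathcal{X}_{\D_1},\ldots,\mathcal{X}_{\D_N}\}$ whenever $\phi$ is in the kernel. You are in fact slightly more complete than the paper, since you explicitly verify that each $\psi_i$ lies in $\ker(-\frac{1}{2}Id+\K_D^*)$ (via the interior Dirichlet problem and the jump relation), a step the paper's proof leaves implicit.
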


\begin{proof}
	The linear independence of $\{\psi_1,\ldots,\psi_N\}$ follows from the linearity and injectivity of $\hat{\Si}_D^{k_0}$, plus the independence of $\{\mathcal{X}_{\D_1},\ldots,\mathcal{X}_{\D_N}\}$.
	
	For $\phi\in L^2(\D)$ the difference between $\hat{\Si}_D^{k_0}[\phi](x)$ and $\Si_D[\phi](x)$ is a constant (in $x$) so they will have the same derivatives. In particular, they are both harmonic and satisfy the same jump conditions across $\D$. Therefore, using arguments as in \Cref{ker_Si_constant}, we see that if $\phi\in\ker(-\frac{1}{2}Id+\K_D^*)$ then $\hat{\Si}_D^{k_0}[\phi]\in\spn\{\mathcal{X}_{\D_1},\ldots,\mathcal{X}_{\D_N}\}$. Thus $\phi\in\spn\{\psi_1,\ldots,\psi_N\}$.
\end{proof}

From \Cref{kernel_Ndim} we know that $\ker(-\frac{1}{2}Id+\K_D^*)$ has dimension equal to the number of connected components of $D$ (a wider discussion can be found in \emph{e.g.} \cite{ammari2013spectral}). Thus we can take a basis
\begin{equation*}
\{\phi_1,\ldots,\phi_N\},
\end{equation*}
of the null space $\ker(-\frac{1}{2}Id+\K_D^*)$. 
Then, in light of the fact that at leading order \eqref{eq:phi} is just $(-\frac{1}{2}Id+\K_D^{*})[\phi]=0$, it is natural to seek a solution of the form
\begin{equation} \label{eq:phi_is_span}
\phi=\sum_{j=1}^{N}a_j\phi_j+O(\omega^2\ln\omega+\delta),
\end{equation}
for some non-trivial constants $a_j$ with $\sum_j|a_j|=O(1)$. The solutions $(\phi,\psi)$ to \eqref{eq:res} are determined only up to multiplication by a constant (and hence so are $a_1,\dots,a_N$). We fix the scaling to be such that the eigenmodes are normalised in the $L^2(D)$-norm
\begin{equation}
\|u\|_{L^2(D)}^2 = \int_{D} |\Si_{D}^{k_b}[\phi]|^2 = 1.
\end{equation}

We now integrate \eqref{eq:phi} over each $\D_i, i=1\ldots N$ and use the results of Lemmas \ref{lem:K1} and \ref{lem:K2} to find that, up to an error of $O(\delta\omega^2\ln\omega)+O(\omega^4\ln\omega)$,

\begin{equation} \label{eq:Bi_defn}
\begin{split}
B_\delta^{(i)}(\omega)[\phi]:=\left(\int_{\D}\phi\right) \left(\omega^2\ln\omega
+\left(\left(1+\frac{c_1}{b_1}-\ln v_b\right)
-\frac{\Si_D[\phi]|_{\D_i}}{4b_1( \int_{\D}\phi )} \right)\omega^2 \right)\\ -\frac{v_b^2}{4b_1|D_i|}\left[\int_{\D_i}\phi
+\frac{\ln (v/v_b)}{2\pi} \left(\int_{\D}\phi\right)\int_{\D_i} (\hat{\Si}_D^k)^{-1}[\chi_{\D}]\right]\delta=0.
\end{split}
\end{equation}

When we substitute the expression \eqref{eq:phi_is_span} for $\phi$ in \eqref{eq:Bi_defn} we find the system of equations, up to an error of order $O(\delta\omega^2\ln\omega)+O(\omega^4\ln\omega)$,

\begin{equation} \label{eq:Bmatrix}
\begin{pmatrix}
B_\delta^{(1)}(\omega)[\phi_1] & B_\delta^{(1)}(\omega)[\phi_2] & \dots & B_\delta^{(1)}(\omega)[\phi_N] \\
\vdots & \vdots & \ddots & \vdots \\
B_\delta^{(N)}(\omega)[\phi_1] & B_\delta^{(N)}(\omega)[\phi_2] & \dots & B_\delta^{(N)}(\omega)[\phi_N]
\end{pmatrix}
\begin{pmatrix}
a_1 \\ \vdots \\ a_N
\end{pmatrix}
=0.
\end{equation}

\begin{remark} \label{rk:basis_indep}
	Thanks to the linearity of the operators $B_\delta^{(i)}$, the solutions $\omega(\delta)$ to \eqref{eq:Bmatrix} (as well as the associated eigenmodes) are independent of the choice of basis $\{\phi_1,\dots,\phi_N\}$.
\end{remark}

\begin{remark}
	One can think of the step where we integrated \eqref{eq:phi} over each $\D_i$, for $i=1\dots,N$, to give \eqref{eq:Bi_defn} as the point where the hybridisation (between the $N$ resonators) was performed (see also e.g. \cite{ammari2017double}).
\end{remark}

\subsection{Numerical computations of resonant modes} \label{subsec:numerics}

In order to improve computational efficiency, we will assume from here onward that the resonators are circular. This means that we can use the so-called multipole expansion method, an explanation of which is provided in \emph{e.g.}~\cite[Appendix C]{ammari2017subwavelength}. The method relies on the idea that functions in $L^2(\D)$ are, on each \textit{circular} $\D_i$, $2\pi$-periodic so we may approximate by the leading order terms of a Fourier series representation. We found that as few as seven terms was sufficient to give satisfactory results.

Using such an approach we can find, for each fixed $\delta>0$, the $N$ values of $\omega\in\mathbb{C}$ such that there exists a nontrivial solution to \eqref{eq:Bmatrix}. For the case where $N=50$ the results are shown in \Cref{fig:resonances_example}.
\begin{figure}
	\begin{center}
		\includegraphics[width=\textwidth]{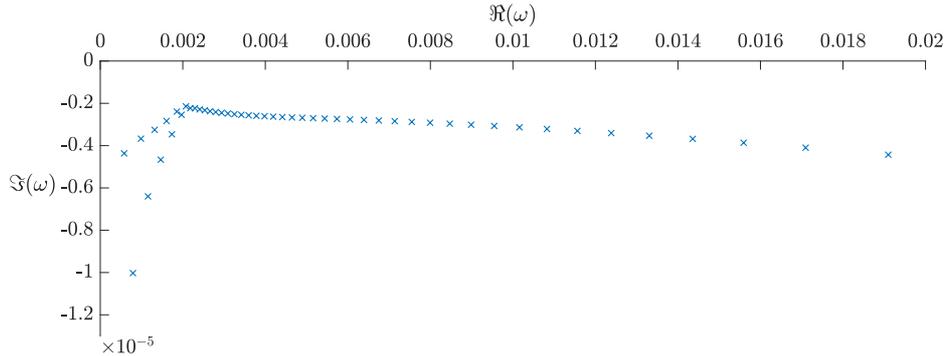}
		\caption{The resonant frequencies, plotted in the complex plane, of a system of 50 resonators arranged linearly with each being 1.05 times the size of the previous.
		The first resonance $\omega_1=0.0002284-0.0000526i$ is omitted. We take $\delta=1/7000$ in this simulation.} \label{fig:resonances_example}
	\end{center}
\end{figure}
We see that there is a range of frequencies where the (the real part of the) resonances occur most commonly. As $N$ is increased, the resonances become increasingly dense in this region. In fact, with the current arrangement, this range of frequencies does not change as $N$ increases. Instead, the region becomes increasingly densely filled.

It is also seen from \Cref{fig:resonances_example} that the imaginary parts of the resonances is smallest in the region where they are most dense. This means that these frequencies experience the least significant attenuation, suggesting that tones in this range will be most easily audible. The reason $\omega_1=0.0002284-0.0000526i$ has been omitted from \Cref{fig:resonances_example} is due to its $O(10^{-4})$ imaginary part. This is not only inconvenient for plotting but also means that this resonant mode will suffer much greater attenuation and thus will be a less significant part of the motion over time.

\begin{figure}
	\begin{center}
		\makebox[\textwidth][c]{\includegraphics[width=1.3\textwidth,trim={0.5cm 3cm 0.5cm 1cm},clip]{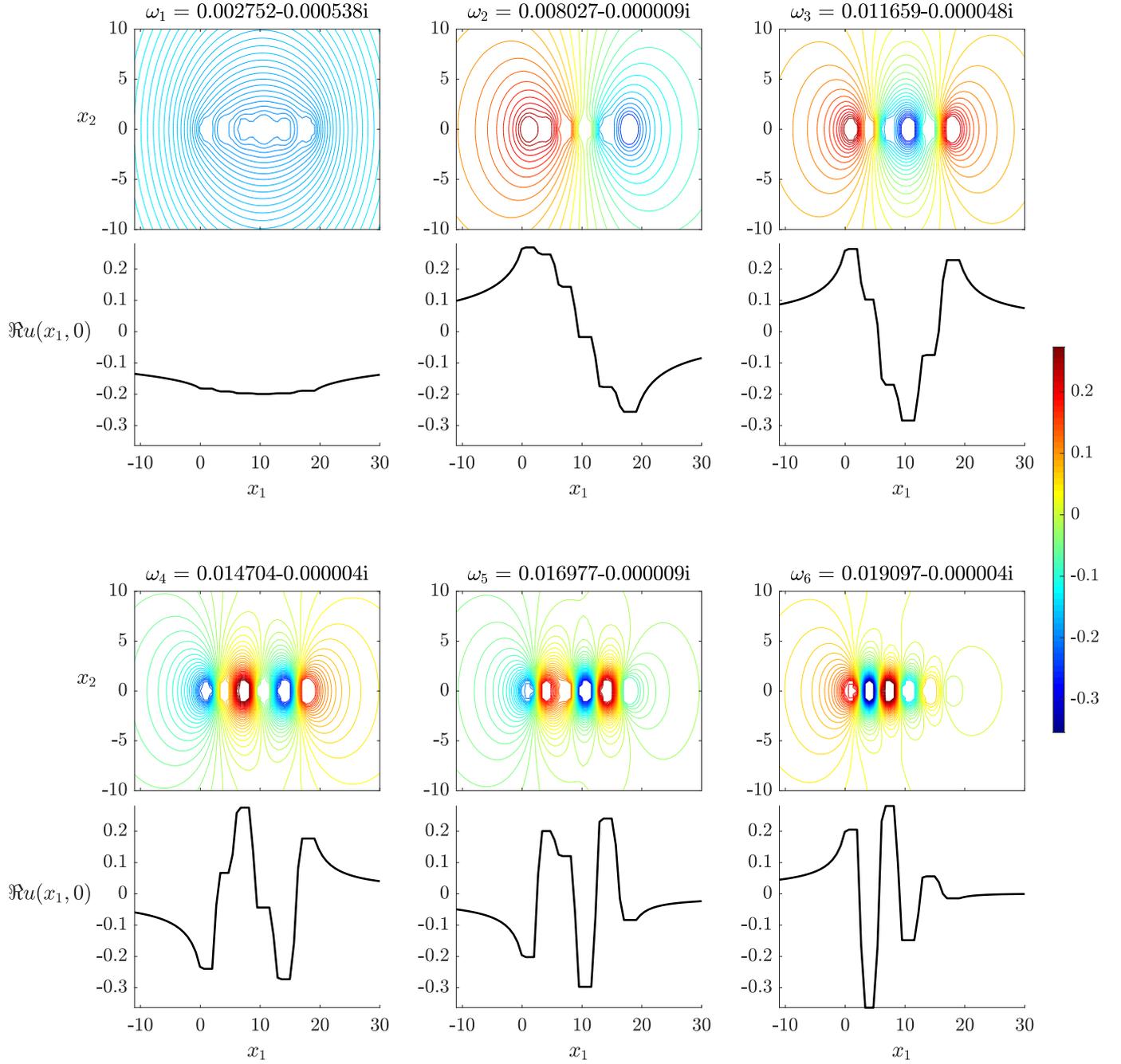}}
		\caption{The acoustic pressure eigenmodes $u_1,\dots,u_6$ for a system of six resonators arranged linearly with each being 1.05 times the size of the previous (smallest on the left). Each pair of plots corresponds to one of the six resonant frequencies. The upper plot shows a contour plot of the function $\Re u_n(x_1,x_2)$. The lower plot shows the cross section of this, taken along the line $x_2=0$ (\emph{i.e.} through the centres of the resonators). The eigenmodes have been normalised such that $\int_D |u_n|^2\,dx=1$ for each $n=1,\dots,N$.
		} \label{fig:eigenmodes_example}
	\end{center}
\end{figure}

It is also important to understand the eigenmodes $u_n$ associated with each resonant frequency $\omega_n$. The six resonant modes for the case of six resonators are shown in \Cref{fig:eigenmodes_example}. They take the form of increasingly oscillating patterns that inherit the asymmetry of the resonator array. 

It is also notable that the solution is approximately constant on each resonator. This is because the solution, taking the form \eqref{eq:layer_potential_representation}, is given by $\hat{\Si}_D^{k_b}[\phi]$ at leading order which by \Cref{ker_Si_constant} is constant for $\phi\in\ker(-\frac{1}{2}Id+\K_D^*)$.


\subsection{Signal processing} \label{subsec:signal_processing}

We wish to offer an explanation of how, given an incident wave $p^{in}(x,t)$, our system of coupled resonators is able to classify (and hence identify) the sound. The system of resonators $D$ is able to decompose the signal over its resonant modes. It is clear that the $N$ eigenmodes are linearly independent so we may define the relevant $N$-dimensional solution spaces.
\begin{defn} We define the $N$-dimensional spaces $X$ and $Y$ as
	\begin{equation}
	X:=\spn\{ u_1(x),\ldots,u_N(x)\},
	\end{equation}
	\begin{equation}
	Y:=\spn\{u_1(x)e^{-i\omega_1t},\dots,u_N(x)e^{-i\omega_Nt}\},
	\end{equation}
\end{defn}

We will approximate the solution by a decomposition in the frequency domain.
The fact that, for $n=1,\dots,N$, the Fourier transform of $e^{-i\omega_n t}$ for $t>0$ is given by $i/(\omega-\omega_n)$ motivates us to employ the ansatz
\begin{equation} \label{eq:decomposition_freq}
u(x,\omega)\simeq\sum_{n=1}^{N} \frac{\alpha_n(\omega)i}{\omega-\omega_n} u_n(x),
\end{equation}
where $\alpha_1,\dots,\alpha_N$ are complex-valued functions of a real variable.

It is important to understand whether knowing the value of the solution on each resonator (which is the information that a cochlea is able to capture) means that one can recover the weight functions $\alpha_1,\dots,\alpha_N$ in \eqref{eq:decomposition_freq}. 

\begin{remark}
	The eigenmodes $u_1,\dots,u_N$ are not orthogonal in $L^2(D)$. It turns out, however, that they are \emph{nearly} orthogonal. For example, the normalised eigenmodes shown in \Cref{fig:eigenmodes_example} satisfy $(u_n,u_m)_{L^2(D)}=O(10^{-3})$ for $n\neq m$.
\end{remark}

\begin{prop} \label{prop:gamma_invertible}
	Let $\{\omega_1,\ldots,\omega_N\}$ be the resonances of the system $D=D_1\cup\ldots\cup D_N$ and denote by $u_1,\ldots,u_N$ the corresponding eigenmodes. Then the matrix $\gamma\in\mathbb{C}^{N\times N}$ defined by
	\begin{equation}
	\gamma_{ij}:=\int_D u_i(x) \overline{u_j(x)}\, dx \quad i,j=1\ldots N,
	\end{equation}
	is invertible.
\end{prop}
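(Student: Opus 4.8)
The plan is to show that $\gamma$, being a Gram matrix, is invertible precisely when the eigenmodes $u_1,\dots,u_N$ are linearly independent as elements of $L^2(D)$. Indeed, $\gamma$ is the Gram matrix of the vectors $u_1,\dots,u_N$ with respect to the inner product $(f,g)_{L^2(D)}=\int_D f\overline{g}$, so $\gamma$ is Hermitian positive semi-definite, and $\det\gamma\neq 0$ if and only if no nontrivial linear combination $\sum_j c_j u_j$ vanishes identically on $D$. So the whole proposition reduces to establishing linear independence of the eigenmodes on $D$ (note: on $D$, not on $\partial D$ or on all of $\mathbb{R}^2$).

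First I would recall the structure of each eigenmode. By \Cref{defn:resonance}, $u_n = \Si_D^{k_{b,n}}[\phi_n]$ inside $D$, where $(\phi_n,\psi_n)$ is a nontrivial solution of $\A(\omega_n,\delta)(\phi_n,\psi_n)^\top = 0$, and where $k_{b,n}=\omega_n/v_b$. At leading order, by \eqref{eq:phi_is_span} and the remark following \Cref{fig:eigenmodes_example}, $u_n$ is approximately constant on each resonator $D_i$, with the constant given (up to the normalisation and lower-order corrections) by the value $\Si_D[\phi_n]|_{\D_i}$; more precisely $u_n|_{D_i} = c^{(n)}_i + O(\omega_n^2\ln\omega_n + \delta)$ where the vector $(c^{(n)}_1,\dots,c^{(n)}_N)$ is determined by $a^{(n)} = (a^{(n)}_1,\dots,a^{(n)}_N)$, the null vector of the hybridisation matrix $B_\delta(\omega_n)$ in \eqref{eq:Bmatrix}. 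Since the resonant frequencies $\omega_1,\dots,\omega_N$ are distinct (they are the $N$ distinct roots of the determinant of $B_\delta(\omega)$, as seen in \Cref{fig:resonances_example}), the corresponding null vectors $a^{(1)},\dots,a^{(N)}$ are linearly independent — distinct eigenvalues of the (generalised, $\omega$-dependent) eigenvalue problem force independent eigenvectors, which one checks from the explicit leading-order form of $B_\delta^{(i)}(\omega)[\phi_j]$. Hence the "profile vectors" $(c^{(n)}_i)_{i}$ are linearly independent, which gives linear independence of the $u_n$ restricted to $D$ (evaluate a vanishing combination $\sum_n c_n u_n \equiv 0$ on $D$, read off the constant on each $D_i$, and deduce $\sum_n c_n a^{(n)} = 0$, hence all $c_n=0$).

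Alternatively — and this is perhaps cleaner and avoids leading-order approximations — I would argue directly: suppose $\sum_{n=1}^N c_n u_n = 0$ in $L^2(D)$ with not all $c_n$ zero. Since $u^s_n := \Si_D^{k_n}[\psi_n]$ (the exterior part, with $k_n = \omega_n/v$) solves the exterior Helmholtz equation $(\Delta + k_n^2)u^s_n = 0$ with the SRC, and the transmission conditions in \eqref{eq:helmholtz_equation} hold across $\D$ with $u^{in}=0$, each $u_n$ is a genuine resonant scattering solution. One would then like to use unique continuation / distinctness of the $\omega_n$: the function $v := \sum c_n u_n$ satisfies $v=0$ in $D$, so $v_-=0$ and $\partial v/\partial\nu|_- = 0$ on $\D$; the transmission conditions then force $v_+ = 0$ and $\delta\,\partial v/\partial\nu|_+ = 0$ on $\D$, and since the exterior piece is $\sum c_n \Si_D^{k_n}[\psi_n]$ — a sum of outgoing solutions at *different* wavenumbers — vanishing Cauchy data on $\D$ plus the SRC should force each $c_n \Si_{D}^{k_n}[\psi_n]$, hence each $c_n$, to vanish. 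The main obstacle is this last step: separating the contributions at the distinct wavenumbers $k_1,\dots,k_N$ is not immediate because they are all nonzero complex numbers and the supports of the densities coincide. The honest route, I think, is the first one — use the explicit leading-order description from \eqref{eq:phi_is_span}--\eqref{eq:Bmatrix} together with the distinctness of the $\omega_n$ (and the near-orthogonality noted in the remark, which shows $\gamma$ is close to the identity after normalisation, so certainly invertible for $\delta$ small) — and I expect the paper's proof takes exactly that line, invoking that $\gamma = \mathrm{Id} + O(\varepsilon)$ in operator norm for a small parameter $\varepsilon$ controlled by $\delta$ and the subwavelength scale, whence invertibility by Neumann series.
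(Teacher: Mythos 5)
Your first paragraph is exactly the paper's approach: the proposition reduces to the statement that a Gram matrix of linearly independent vectors is nonsingular. The paper takes the linear independence of $u_1,\dots,u_N$ as given (it is asserted as ``clear'' immediately before the definition of the space $X$) and its proof is simply the Gram--Schmidt factorisation: with $Q=P^{-1}$ lower triangular and nonsingular one gets $\gamma = Q\,\overline{Q}^{\,T}$, hence $\det\gamma = |\det Q|^2>0$. Your closing guess that the paper argues via $\gamma = Id + O(\varepsilon)$ and a Neumann series is not what happens, although that near-orthogonality observation would also suffice. The bulk of your effort --- actually establishing linear independence --- goes beyond what the paper supplies, and two of your steps there do not hold up as stated: for a nonlinear ($\omega$-dependent) eigenvalue problem, distinct resonant frequencies do \emph{not} automatically force the corresponding null vectors $a^{(n)}$ of $B_\delta(\omega_n)$ to be linearly independent (that is a feature of linear eigenvalue problems), and your unique-continuation route founders, as you admit, on separating outgoing single layer potentials at distinct nonzero wavenumbers supported on the same curve. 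So relative to the paper you are not missing anything, but if you want a complete argument you would still need to prove the linear independence the paper asserts --- the leading-order profile-vector argument is the right idea, but it needs the independence of the $a^{(n)}$ established by other means (e.g.\ from the explicit structure of \eqref{eq:Bmatrix} or the near-orthogonality $(u_n,u_m)_{L^2(D)}=O(10^{-3})$ noted in the remark).
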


\begin{proof}
	We can apply the Gram-Schmidt procedure to produce a basis $\{ v_1,\ldots,v_N\}$ for $X$ that is orthonormal with respect to $(\cdot,\cdot)_{L^2(D)}$. This procedure produces a nonsingular lower triangular matrix $P\in\mathbb{C}^{N\times N}$ such that $(v_1,\dots,v_N)^T=P(u_1,\dots,u_N)^T$ (superscript $T$ denotes the matrix transpose). If we define $Q\in\mathbb{C}^{N\times N}$ as $Q:=P^{-1}$ then $Q$ is also nonsingular and lower triangular. We can then calculate that
	\begin{equation} \label{eq:gram-s_matrix}
	\begin{bmatrix}
	u_1 & \dots & u_N
	\end{bmatrix}^T
	\begin{bmatrix}
	\overline{u_1} & \dots & \overline{u_N}
	\end{bmatrix}
	= Q
	\begin{bmatrix}
	v_1 & \dots & v_N
	\end{bmatrix}^T
	\begin{bmatrix}
	\overline{v_1} & \dots & \overline{v_N}
	\end{bmatrix} \overline{Q}^T.
	\end{equation}
	Integrating \eqref{eq:gram-s_matrix} componentwise gives that, for $i,j=1,\dots, N$, it holds that
	\begin{equation}
	\gamma_{ij} = \left[QI_N\overline{Q}^T\right]_{ij},
	\end{equation}
	and thus
	\begin{equation}
	\det(\gamma) = |\det(Q)|^2>0.
	\end{equation}
	
\end{proof}

In order to find the weight functions $\alpha_1,\dots,\alpha_N$ in \Cref{eq:decomposition_freq} we must take the $L^2(D)$-product with $u_n(x)$ for $n=1,\dots,N$ and then invert $\gamma$. This gives that
\begin{equation} \label{eq:solve_for_alpha}
\begin{pmatrix}
\frac{\alpha_1(\omega)i}{\omega-\omega_1} \\ \vdots \\ \frac{\alpha_N(\omega)i}{\omega-\omega_N}
\end{pmatrix}
= \overline{\gamma}^{-1}
\begin{pmatrix}
\left(u(\cdot,\omega),u_1\right)_{L^2(D)} \\ \vdots \\ \left(u(\cdot,\omega),u_N\right)_{L^2(D)}
\end{pmatrix}.
\end{equation}
{
Thanks to its representation \eqref{eq:layer_potential_representation} in terms of single layer potentials, $u(\cdot,\omega)$ is an analytic function of $\omega\in\mathbb{C}$. Thus, from \eqref{eq:solve_for_alpha} we can see that $\alpha_1,\dots,\alpha_N$ are analytic and hence we can recover a similar decomposition for $p(x,t)$ using the Laplace inversion theorem
}

\begin{equation} \label{eq:decomposition_time}
\begin{split}
p(x,t) &\simeq \frac{1}{2\pi}\sum_{n=1}^{N} u_n(x) \int_{-\infty}^{\infty} \frac{\alpha_n(\omega)i}{\omega-\omega_n}e^{-i\omega t} \,d\omega \\
&= \sum_{n=1}^{N} u_n(x) \alpha_n(\omega_n) e^{-i\omega_nt}, \quad t>0.
\end{split}
\end{equation}

\begin{figure}
	\begin{center}
		\includegraphics[width=\textwidth,trim={0 0.5cm 0 2cm}]{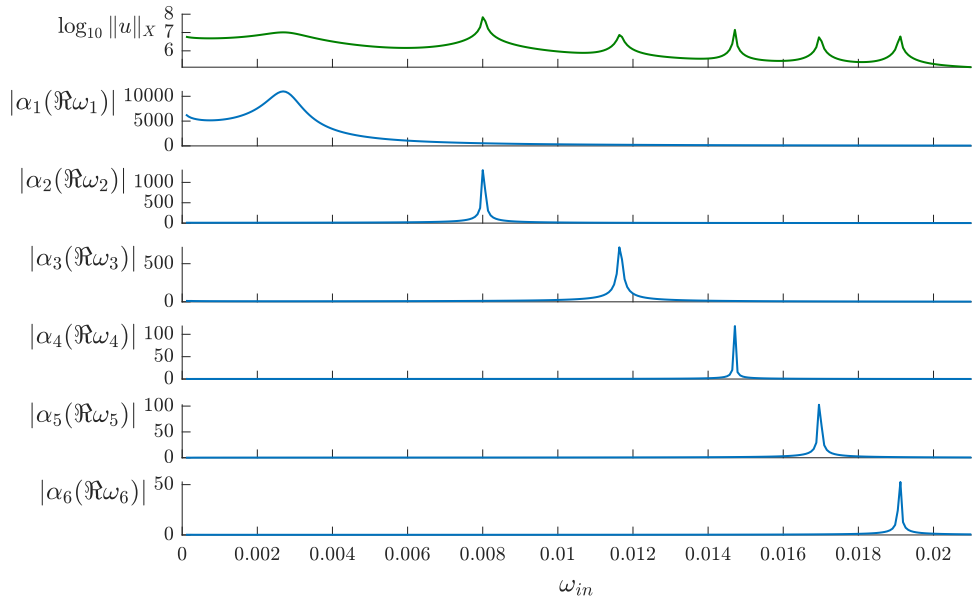}
	\end{center}
	\caption{A system of six resonators filters an acoustic signal into the six resonant frequencies. We consider a system of six linearly arranged circular resonators that increase in size by a factor of $1.05$ which is subjected to an incoming plane wave with frequency $\omega_{in}$. 
		The first plot shows how the norm of the solution $u(x,\omega)$ to \eqref{eq:helmholtz_equation} varies as a function of $\omega_{in}$. 
		We then show how each coefficient $\alpha_1(\omega_1),\dots,\alpha_N(\omega_N)$ in \eqref{eq:decomposition_time} varies.
		The six resonant frequencies of this system are $\omega_1= 0.002752 -0.000538i$,
		$\omega_2=   0.008026 -0.000009i $,
		$\omega_3=   0.011659 -0.000048i $,
		$\omega_4=   0.014703 -0.000004i $,
		$\omega_5=   0.016976 -0.000009i $,
		$\omega_6=   0.019096 -0.000004i $.
	}
	\label{fig:frequency_response_N=6}
\end{figure}

\begin{example}
	$p^{in}(x,t)$ is a plane wave
\end{example}
\noindent We take as an example the case where $p^{in}(x,t)$ is a pulse of a plane wave with frequency $\omega_{in}\in\mathbb{R}$ travelling in the $x_1$ direction. This is given by
\begin{equation} \label{eq:plane_wave}
p^{in}(x,t)=e^{i\omega_{in}(x_1/v-t)}, \quad 0<t<1.
\end{equation}
This has Fourier transform
\begin{equation}
u^{in}(x,\omega)=2e^{\frac{i}{2}(\omega-\omega_{in})}\sinc(\omega-\omega_{in})e^{i\omega_{in} x_1/v}.
\end{equation}
We can then compute $\alpha_1(\omega),\dots,\alpha_N(\omega)$ as in \eqref{eq:solve_for_alpha}.

In \Cref{fig:frequency_response_N=6} we show firstly how the $L^2(D)$-norm of the solution to the scattering problem \eqref{eq:helmholtz_equation} varies as a function of $\omega_{in}$. As is expected, the response is (locally) much greater when $\omega_{in}$ is close to $\Re(\omega_n)$ for some $n=1,\dots,N$. We also show how the weights $\alpha_1(\omega_1),\dots,\alpha_N(\omega_N)$ in \eqref{eq:decomposition_time} vary as a function of $\omega_{in}$. Each constant is small except in a region of the associated resonant frequency when the corresponding eigenmode is excited most strongly. 

\begin{wrapfigure}{R}{0.45\textwidth}
	\centering
	\includegraphics[width=0.45\textwidth,trim={0.5cm 0 0.5cm 0}]{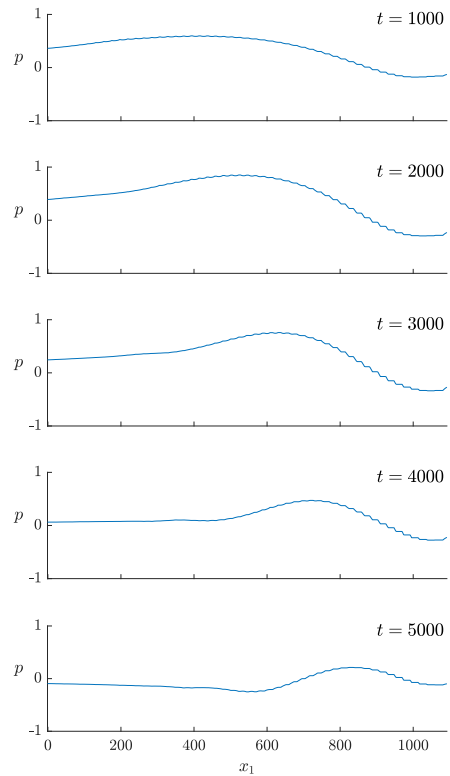}
	\caption{Our graded resonance model exhibits travelling wave behaviour in the pressure field. We show the evolution over time of the acoustic pressure $p=p((x_1,0),t)$ scattered by 50 evenly spaced circular resonators. 
	The acoustic pressure is initially zero then the resonators are simultaneously excited at $t=0$. We plot the cross-section of the field along $x_2=0$ (through the centres of the resonators).}
	\label{fig:travellingwave}
	\vspace{-10pt}
\end{wrapfigure}

It should also be noted that $|\alpha_n(\omega_n)|$ decreases in $n$. If we considered higher order resonances the corresponding constants would be significantly smaller. This justifies our choice to approximate $p$ as an element of $Y$ in \eqref{eq:decomposition_time} (\emph{i.e.} to only consider the $N$ subwavelength modes).

\subsection{Travelling waves} \label{sec:cochlea}

In trying to resolve the differences between the two main classes of cochlear model a crucial realisation is that our (resonance) model for the acoustic pressure exhibits the travelling wave behaviour. This is easy to see in models based on graded arrays of \textit{uncoupled} resonators, since a resonator's response time increases with decreasing characteristic frequency \cite{fletcher1992acoustic, bell2012resonance}, but is also true of our hybridised model.

Simultaneously exciting a graded array that is initially at rest produces the evolution shown in \Cref{fig:travellingwave}. The existence of a wave travelling from the small high-frequency resonators at the base of the cochlea to the larger low-frequency resonators at the apex is clear. This wave is the movement of the position of maximum acoustic pressure along the array of resonators. It is a consequence of the asymmetric eigenmodes (shown in Figures~\ref{fig:tonotopic}a-d) growing from rest at different rates. 

The parallels between the travelling wave in \Cref{fig:travellingwave} and that observed (\emph{e.g} by B\'ek\'esy) on the basilar membrane are clear. While it is true that acoustic waves enter the cochlea at the base and travel through the fluid to the apex, the wave observed by B\'ek\'esy moves much more slowly than this. The speed of sound in cochlear fluid is approximately 1500\si{\metre\per\second} whereas the travelling wave is observed at speeds close to 10\si{\meter\per\second}\cite{bell2012resonance, donaldson1993derived}. This justifies the choice to assume that all the resonators are excited simultaneously by an incoming signal \cite{bell2012resonance}. Since pressure changes in the fluid and the motion of the membrane will be physically linked, it is not surprising that the wave in \Cref{fig:travellingwave} shares a number of characteristics with B\'ek\'esy's observations. For instance, the amplitude initially grows before quickly diminishing and the wave is seen to slow as it moves through the array \cite{fletcher1992acoustic, von1960experiments, donaldson1993derived}. It should also be noted that travelling waves have been observed in the cochlear fluid (as predicted here) as well as on the membrane \cite{olson1999direct}.

\subsection{Tonotopic map} \label{sec:tonotopic}

B\'ek\'esy's famous experiments further revealed the existence of a relationship between signal frequency and the position in the cochlea where the sound is most strongly detected. His results showed that the frequency $f(x)$ giving rise to maximum excitation at a distance $x$ from the base of the cochlea satisfies a tonotopic map of the form
\begin{equation} \label{eq:tonotopic_map}
f(x) = ae^{-x/d}+c,
\end{equation}
for some $a,d,c\in\mathbb{R}$ \cite{von1960experiments}. In \Cref{fig:tonotopic} we show the relationship between the position of maximum amplitude of each eigenmode and the associated resonant frequency. We see that, if some of the lowest frequency modes are ignored, the pattern follows a relationship that is approximately of the form \eqref{eq:tonotopic_map} (with $a=0.0126, d = -0.0117, c = 0.0060$).
The eigenmodes shown in Figures~\ref{fig:tonotopic}b-d demonstrate the basis for the tonotopic map. Each features oscillations with a clear peak followed by a rapid decrease in amplitude (which explains the growth and then rapid decay of the travelling wave that was observed in \Cref{fig:travellingwave}).

It is not clear why the lowest frequency modes (\emph{e.g.} Figure~\ref{fig:tonotopic}a) do not fit the pattern that is established by the majority of the eigenmodes, or what the implications of this could be. However the relatively large negative imaginary parts of the associated resonant frequencies mean this phenomenon has a less significant impact on the evolution of the acoustic pressure field.


\begin{figure}
	\begin{tikzpicture}
	\node[anchor=south west,inner sep=0] (image) at (0,0) {\includegraphics[width=\textwidth]{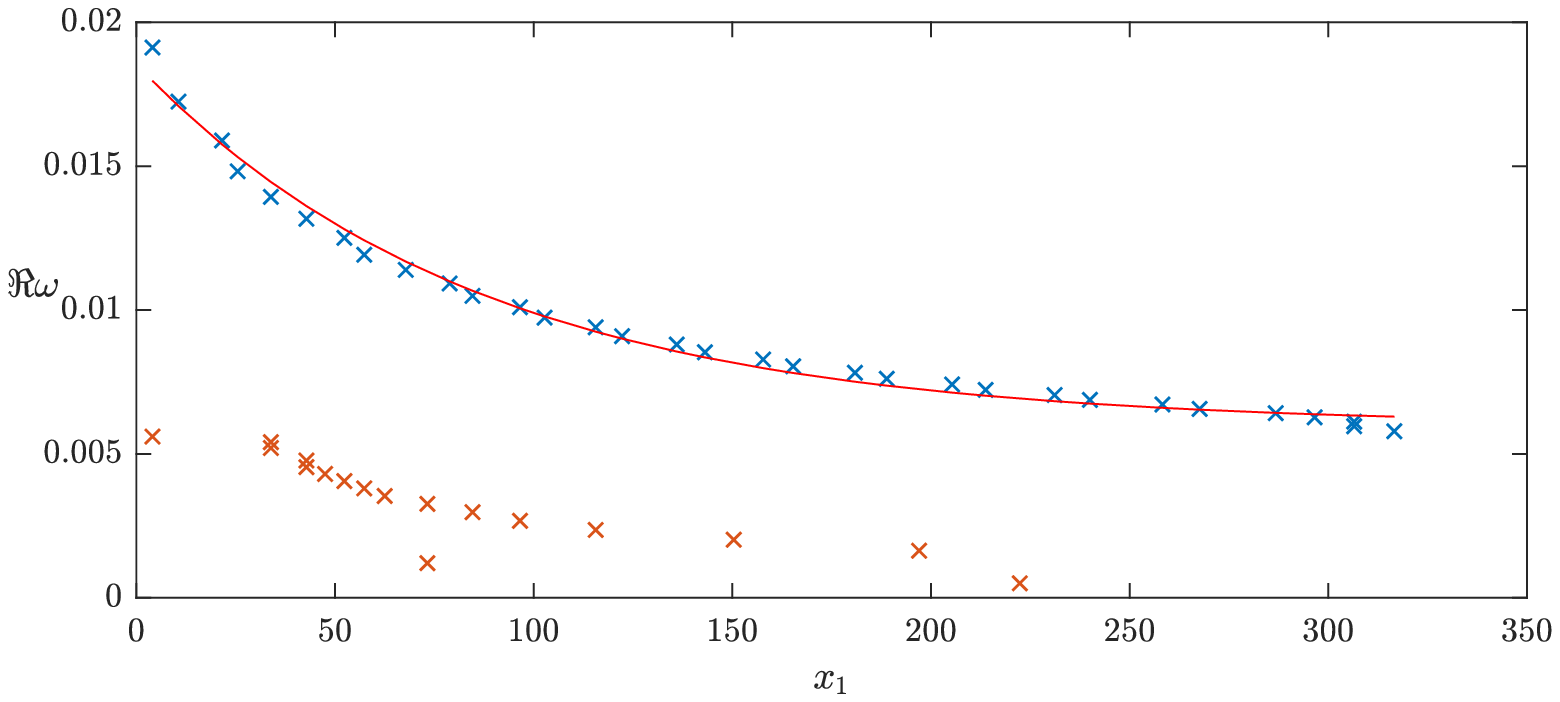}};
	\node[] at (3.7,1.9) {\footnotesize a};
	\node[] at (11.3,2.5) {\footnotesize b};
	\node[] at (7.5,2.8) {\footnotesize c};
	\node[] at (4.5,3.5) {\footnotesize d};
	\end{tikzpicture}\\
	\centering
	\subfigure[$\omega_{10}=0.003803-0.000003i$]{
		\includegraphics[width=0.45\textwidth]{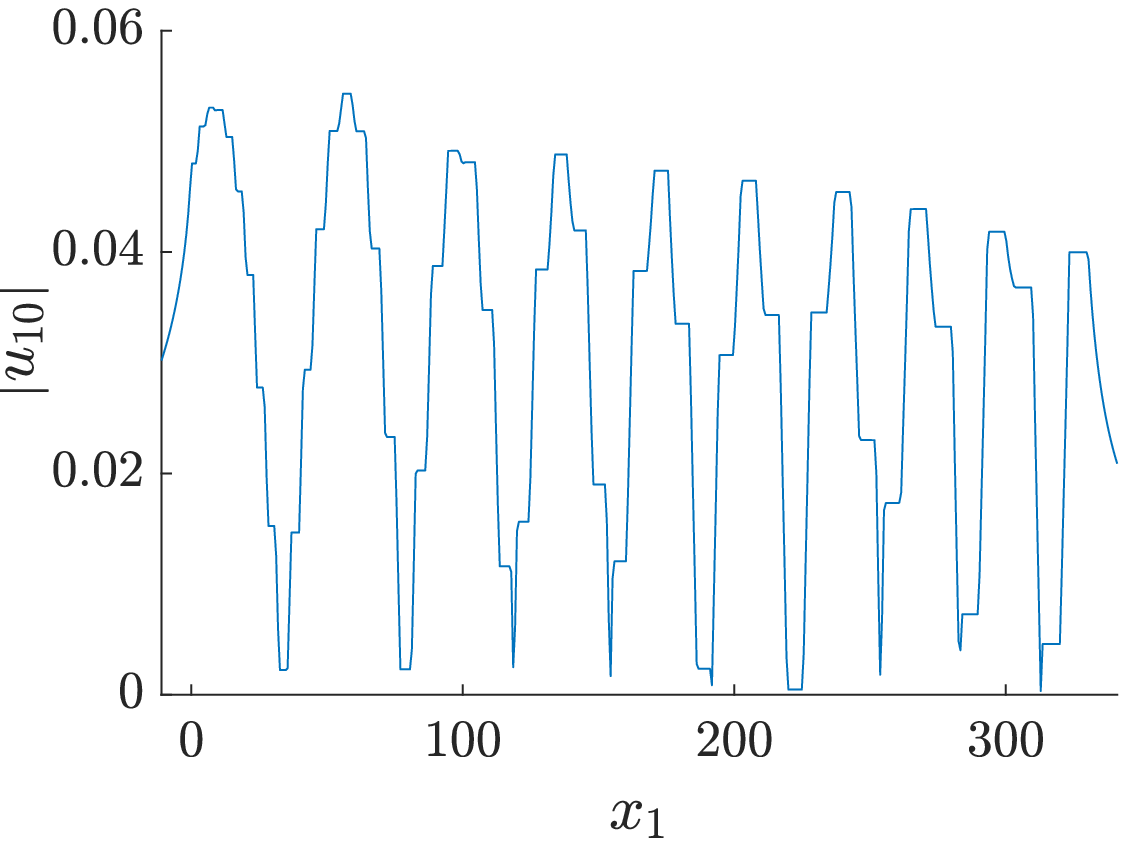}
	}
	\subfigure[$\omega_{20}=0.006122-0.000001i$]{
		\includegraphics[width=0.45\textwidth]{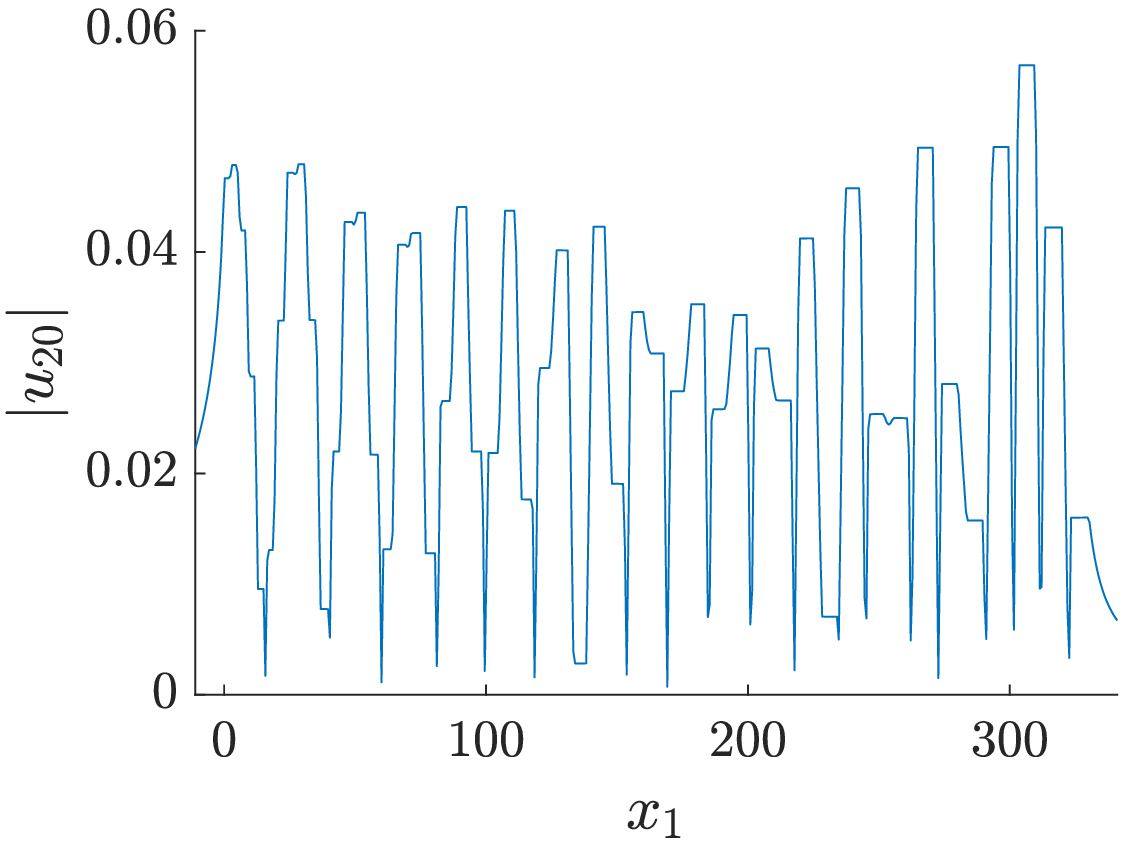}
	}\\
	\subfigure[$\omega_{30}=0.007612-0.000001i$]{
		\includegraphics[width=0.45\textwidth]{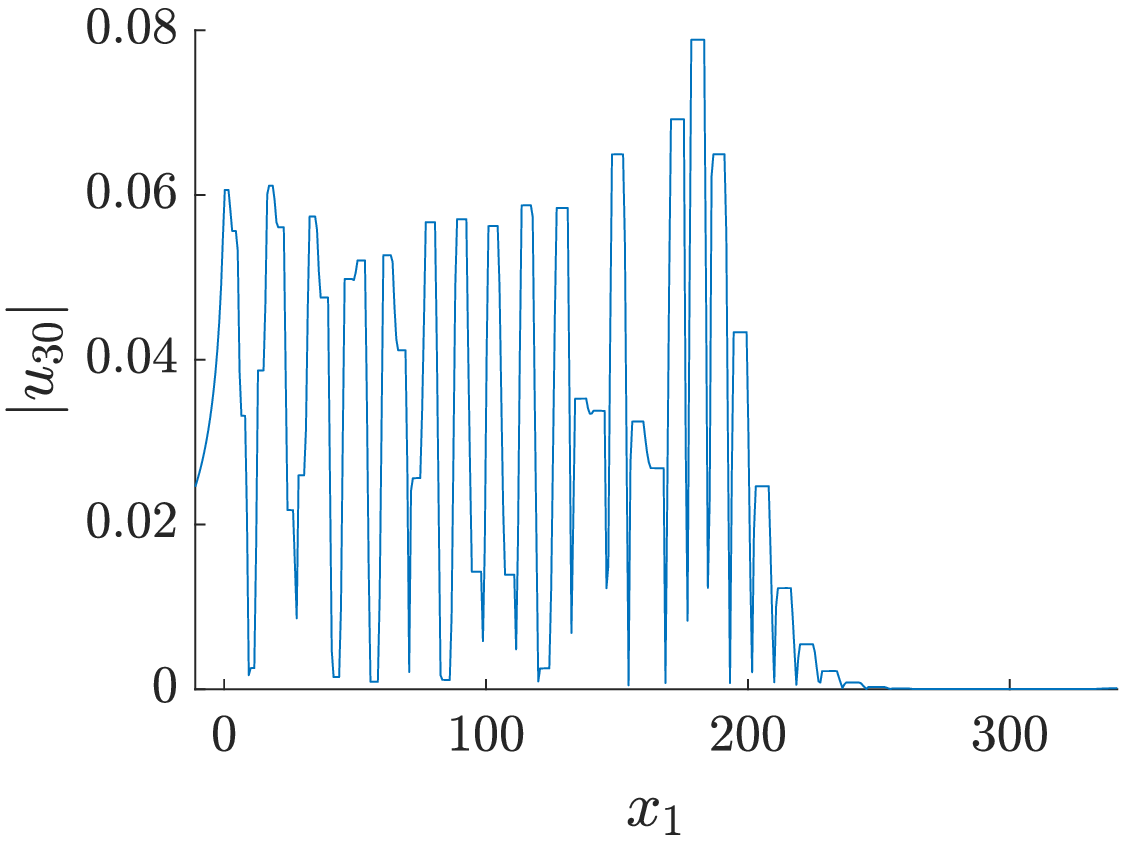}
	}
	\subfigure[$\omega_{40}=0.01049-0.000002i$]{
		\includegraphics[width=0.45\textwidth]{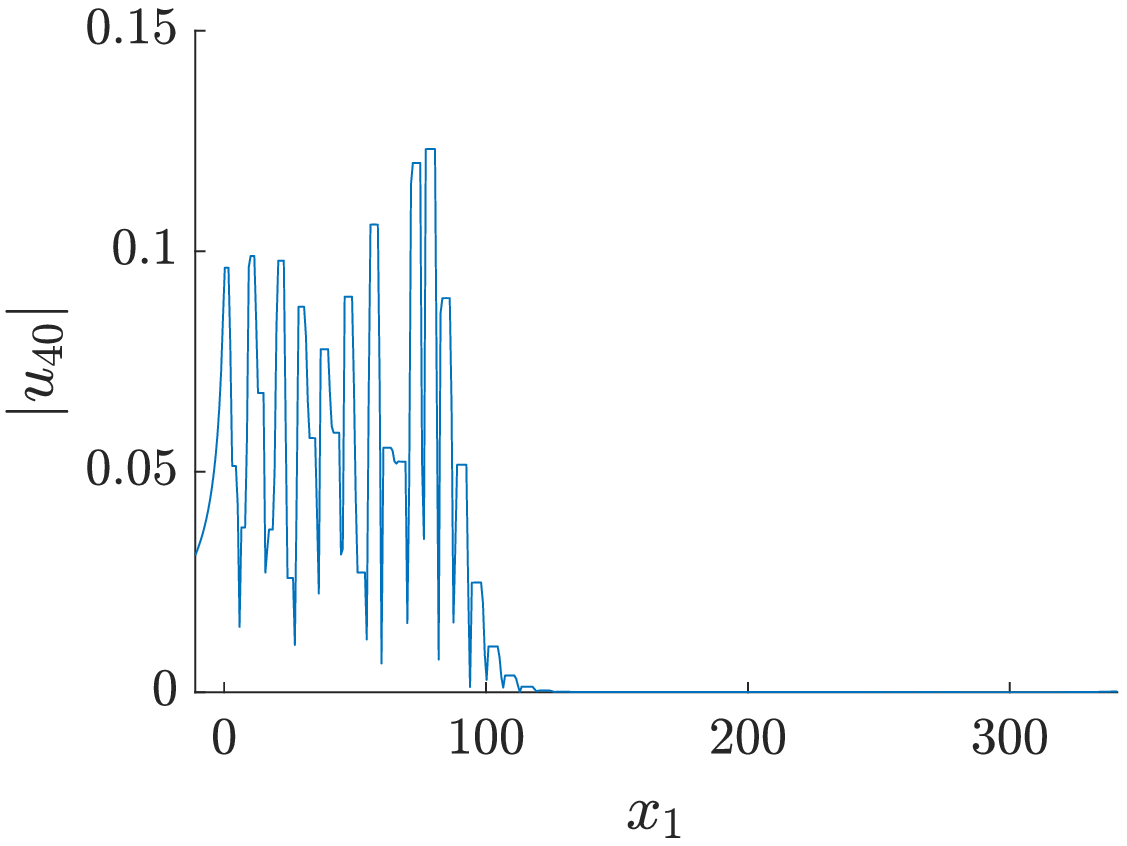}
	}
	\caption{ The existence of a tonotopic map for a passive system of graded oscillators. The top plot shows, for each eigenmode, the relationship between the real part of the associated resonant frequency $\Re\omega$ and the location ($x_1$-coordinate) of the maximum amplitude. We study the case of 50 resonators, increasing in size by a factor of 1.05 from left to right. A (least squares) approximation to the relationship exhibited by the blue points is shown, this has equation $0.0126e^{-0.0117x}+0.0060$. The 17 orange points are excluded from this calculation.
		\\
		(a)-(d) are the eigenmodes corresponding to the points marked on the top plot. We depict the absolute value of each eigenmode $|u_n|=|u_n(x_1,0)|$ along the line $x_2=0$ (through the centres of the resonators). It should be noted that the eigenmodes quickly decrease to zero outside of the region where the resonators are located.
		\\
	} \label{fig:tonotopic}
\end{figure}

\newpage
\section{Concluding remarks}

In this paper, we have modelled the passive cochlea by approximating its response by a graded array of subwavelength resonators. We have used boundary integral methods to compute leading order approximations to the resonant frequencies and associated eigenmodes this fully-coupled system. This model has the ability to decompose incoming signals into these resonant modes. As the number of resonators is increased, the resonant frequencies densely fill a finite range meaning that a large system can capture signals with a high frequency resolution. 

It is a significant observation that a simple graded-resonance model, with appropriate coupling between the resonators, predicts travelling wave behaviour in the acoustic pressure field, and that this has similar characteristics to that observed in the membrane motion. 
In some sense, these models represent the unification of Helmholtz' and B\'ek\'esy's ideas \cite{bell2012resonance, bell2018perspectives}.


It is well known that the cochlea is an active organ and even emits sounds (known as otoacoustic emissions) as part of its response to a signal \cite{kemp1978stimulated, kemp2008otoacoustic, crawford1985mechanical, moller2000hearing, shera2003mammalian}. For instance, a key feature that our current model lacks is the ability to amplify quiet sounds more greatly than louder ones. Such non-linear amplification is needed in order to account for the ear's remarkable ability to hear sounds over a large range of amplitudes. In this work, we have only considered a passive system of resonators but have presented a model which, by introducing appropriate non-linear forcing terms in \eqref{eq:wave_equation}, can be modified to include active elements in future work.


\vspace{0.7cm}

\noindent The code developed for this study is available online at

 \url{https://github.com/davies-b/cochlea_passive} 

\section*{Acknowledgements} 
The authors are grateful to Fabrice Lemoult for their participation in valuable discussions and to Andrew Bell for insightful comments made on an early version of this manuscript.

\bibliographystyle{apa}
\bibliography{cochlea}

\appendix
\section{Material parameters} \label{appen:high_contrast}

Here, we estimate the appropriate material parameters for the system of subwavelength resonators used in our version of the model from \cite{babbs2011quantitative}. In particular, using physical values for the cochlea and representations of subwavelength acoustic resonators as harmonic oscillators, we estimate the appropriate value for the bulk modulus contrast $\mu$, defined in \eqref{defn:contrasts}.

In \cite{babbs2011quantitative} it is shown that a suitable approximation to basilar membrane motion can be achieved by considering an array of masses on springs. For a piece of membrane with area $A$, thickness $h$, width $w$ and Young's modulus $E$, the spring constant of the equivalent resonator is shown to be given by
\begin{equation} \label{eq:membrane_formula}
K = C\frac{EAh^3}{w^4},
\end{equation}
where $C\approx30$ is a dimensionless constant.

In \cite{devaud2008minnaert} it is shown that a subwavelength acoustic resonator behaves as a one-degree-of-freedom harmonic oscillator. In the case where the resonator $D_i$ is spherical with radius $R$, it is shown that its stiffness is given by
\begin{equation} \label{eq:bubble_formula}
K = 12\pi \kappa_b R.
\end{equation}

Combining \eqref{eq:membrane_formula} and \eqref{eq:bubble_formula}, we see that the contrast $\mu$ is given by
\begin{equation}
\mu = \frac{C}{12\pi} \frac{EAh^3}{w^4} \frac{1}{\kappa}. 
\end{equation}
In \Cref{table:values}, we give values for the relevant material parameters, derived by experimentalists working on biological cochleas. Using the orders of magnitude of these values we find that, if the membrane is approximated by $N=O(10^2)$ subwavelength resonators, then it should hold that 
\begin{equation*}
\mu \approx O(10^{-8}).
\end{equation*}

\begin{table}[h]
	\footnotesize
	\begin{center}
		\begin{tabular}{m{0.4\textwidth} m{0.4\textwidth}  }
			\hline
			Quantity &  Approximate Value 
			\\ \hline
			$L$: length of uncoiled cochlea & 3.5cm \\
			$w$: width of basilar membrane & 0.015cm at base to 0.056cm at apex \\ 
			$h$: average thickness of basilar membrane  & 0.002cm \\
			$r$: average radius of scalae & 0.1cm \\ 
			$E$: Young's modulus of basilar membrane & $10^8$\si{\newton\per\meter\squared} at base to $10^7$\si{\newton\per\meter\squared} at apex \\
			$\kappa$: bulk modulus of water & $2\times10^9$\si{\pascal} \hfill \cite{mcallister2013pipeline}
		\end{tabular}
	\end{center}
	\caption{Approximate values for the material parameters of a biological cochlea. Unless specified, the values are taken from \cite{babbs2011quantitative}.} \label{table:values}
\end{table}

\end{document}